\documentclass{amsart}

\overfullrule=0pt

\usepackage{amssymb}
\usepackage{enumerate}
\usepackage{amsmath}
\usepackage{amsthm}
\usepackage{cancel}
\usepackage{tikz} 
\usepackage{float}
\usepackage{mathtools}
\usepackage{graphicx}
\usepackage{biblatex}

\usepackage{pgf,tikz}\usepackage{mathrsfs}\usetikzlibrary{arrows}

\numberwithin{equation}{section}
%\numberwithin{equation}

\newtheorem{thm}{Theorem}[section]

\newtheorem{lem}[thm]{Lemma}

\theoremstyle{definition}

\newcommand{\sn}{S_n}
\newcommand{\bn}{B_n}
\newcommand{\dn}{D_n}

\newcommand{\ol}[1]{\overline{#1}}

%\input{psfig.sty}
%\numberwithin{figure}{section}
%\numberwithin{subfigure}{section}

\setcounter{MaxMatrixCols}{20}

\newcommand{\mylabel}[2]{#2\def\@currentlabel{#2}\label{#1}}

\addbibresource{pancake.bib}

\begin{document}
\definecolor{ffqqqq}{rgb}{1.,0.,0.}
\definecolor{qqqqff}{rgb}{0.,0.,1.}
\definecolor{ududff}{rgb}{0.30196078431372547,0.30196078431372547,1.}
\definecolor{ffqqtt}{rgb}{1.,0.,0.2}
\definecolor{zzttqq}{rgb}{0.6,0.2,0.}
\definecolor{uuuuuu}{rgb}{0.26666666666666666,0.26666666666666666,0.26666666666666666}

\author{Sa\'ul A. Blanco and Charles Buehrle}%\corref{cor}}
\title{Presentations of Coxeter groups of type $A$, $B$, and $D$ using prefix-reversal generators}
%\cortext[cor]{Corresponding author}
\address{Department of Computer Science, Indiana University, Bloomington, IN 47408}
\email{sblancor@indiana.edu}
\address{ Department of Mathematics, Physics, and Computer Studies, Notre Dame of Maryland University, Baltimore, MD 21210}
\email{cbuehrle@ndm.edu}
%\address{ Department of Mathematics, Physics, and Computer Studies, Notre Dame of Maryland University, Baltimore, MD 21210}

%\begin{keyword}
%pancake sorting \sep prefix reversal \sep word problem \sep symmetric group
%\end{keyword}

%\bibliographystyle{../dart}
%\bibliographystyle{plain}

\date{\today}

%  \begin{titlepage}
%  \end{titlepage}

\begin{abstract}
Here we provide three new presentations of Coxeter groups type $A$, $B$, and $D$ using prefix reversals (pancake flips) as generators. We prove these presentations are of their respective groups by using Tietze transformations on the presentations to recover the well known presentations with generators that are adjacent transpositions. 
\end{abstract}

\maketitle

\section{Introduction}\label{s:intro}

Obtaining an \emph{abstract definition} or \emph{presentation} of a group in terms of generators and relations on those generators has a long history. For example, see Burnside, Carmichael, and Coxeter and Moser \cite{Burnside, Carmichael, CoxMoser}. In these historic examples, the choice of generators and their defining relations are more flexible. Here our motivation is to begin with a particular set of generators, the \emph{prefix reversals} or \emph{pancake flips}, and determine the required set of relations that will present the intended group. The groups that we provide presentations for herein are particular types of \emph{Coxeter groups}, specifically those of type $A$, $B$, and $D$.   

\emph{Prefix-reversal sorting} or \emph{pancake sorting} was first introduced in the Monthly by Jacob E. Goodman (under the pseudonym Harry Dweighter) \cite{Dweighter75}. The setup is that a permutation of length $n$ is to be sorted using only reversals of the first $k$ elements, where $k$ may be any integer between 2 and $n$. The natural setting for this type of sorting is in the \emph{symmetric group of degree $n$}, denoted by $\sn$ or $A_{n-1}$ in the context of Coxeter groups. 

The first significant bounds for the maximum number of prefix reversals required to sort was given in Gates and Papadimitriou \cite{GatesPapa}, which were later improved by Chitturi et al.~in~\cite{Chitt}. In their work, Gates and Papadimitriou also introduced the \emph{burnt-pancake problem}. This variant introduces an ``orientation'' (or sign) to the elements in the permutation. In the context of Coxeter groups, these \emph{signed prefix reversals} generate the \emph{hyperoctahedral group} or \emph{signed symmetric group of degree $n$}, denoted by $B_n$. An index 2 subgroup of these groups are those signed permutations with an even number of negative elements. This subgroup, which is the symmetry group of the $n$-dimensional demihypercube, is denoted by $D_n$. Although they are not considered in any variant of pancake sorting we provide a prefix-reversal presentation of these groups, out of convenience.

This work is, in a way, a continuation of work begun my the authors in \cite{BBrelations}. In that work, ``Coxeter-like'' relations on the prefix-reversal generators of $S_n$ and $B_n$. That type of relation was not sufficient to provide a presentation of either group, alone. This work serves as a culmination of the effort, where a full collection of relations is provided. For small values of $n$, cardinality checks were performed in GAP4~\cite{GAP4} to verify the group presentations (up to $n=12$ for $S_n$ and up to $n=8$ for $B_n$ and $D_n$). 

\textbf{Main Results.} The main results of this paper are a presentation for the Coxeter groups $A_n,B_n$, and $D_n$ in terms of prefix reversals as generators. These presentations can be found in Theorem~\ref{t:snpres},~\ref{t:bnpres}, and ~\ref{t:dnpres}. 

There are some applications to the pancake problem that one could explore. For example, one could use the Knuth-Bendix algorithm to create a confluent rewriting system that could be of used in reducing randomly generated permutations expressed as words in prefix reversals to find some probabilistic predictions of (burnt) pancake numbers that are presently unknown.

\section{Terminology and Notation}\label{s:notation}

%If $m,n$ are two positive integers with $m\leq n$, we denote by $[n]$ the set $\{1,2,\ldots,n$ and by $[m,n]$ the set $\{m,m+1,\ldots,n\}$.

A \emph{(finite) presentation} of a discrete group $G$ is given by $\langle S \mid R\rangle$. The set of \emph{generators} $S$ is a finite list of elements of $G$ such that any element of $G$ is expressible as a finite product of elements of $S$. The set $R$ is a collection of \emph{defining relators} on the elements $S$ which are equal to the \emph{identity element} $e$ of $G$. The group $G$ is then isomorphic to the quotient of the free group on $S$ and the normal closure of $R$. Similar to relators are \emph{relations} which are equalities of elements in the group. Note that for $v_1,v_2 \in G$ that $v_1=v_2$ is a relation if $v_1v_2^{-1}$ is a relator. In some places we strike-through pairs of subwords that form a relator, e.g.~$uv \in R$ so $\cancel{u}\cancel{v}w = w$. 

A word $w$ is \emph{derivable} from a set of relators $R=\{R_i\}$ if a finite number of insertions of any $R_i$ or $R_i^{-1}$ between any symbols of $w$ or a finite number of deletions of any $R_i$ or $R_i^{-1}$ in $w$ change $w$ to the empty word. The \emph{Tietze transformations} are a collection of transformations that do not affect the group isomorphism class of a given group presentation. The Tietze transformations as stated in Magnus, Karrass, and Solitar \cite{Magnus} are given below: 
\begin{quote}
    Given a presentation of $G = \langle a, b, c, \ldots \mid P, Q, R,\ldots\rangle$, any other presentation can be collected by a repeat application of: 
    \begin{enumerate}
     \item[\mylabel{T1}{T1}] If words $S,T,U,\ldots$ are derivable from $P,Q,R,\ldots$, then add $S,T,U,\ldots$ to the defining relators.
     \item[\mylabel{T2}{T2}] If some relators, say $S,T,U,\ldots$, listed among $P,Q,R,\ldots$, are derivable from the others, delete $S,T,U,\ldots$ from the relators.
     \item[\mylabel{T3}{T3}] If $K,M,N,\ldots$ are any words in $a,b,c,\ldots$, then adjoin the symbols $x,y,z,\ldots$ to generators and adjoin $x=K,y=M,z=N,\ldots$ to relators.
     \item[\mylabel{T4}{T4}] If some relators take the form $p=V, q=W, \ldots$ where $p,q,\ldots$ are generators and $V,W,\ldots$ are words in other generators than $p,q,\ldots$, then delete $p,q,\ldots$ from generators, delete $p=V,q=W,\ldots$ from relators, and replace $p,q,\ldots$ by $V,W,\ldots$ respectively, in relators.    
    \end{enumerate}
\end{quote}

Throughout this article we adopt the notation of \emph{integer intervals} to represent the set $[i,j] = \{ i, i+1, i+2, \ldots, j\}$ for any integers $i,j$ with $i<j$. If $i=1$, then the interval may be written as $[j] = \{1, 2, 3, \ldots, j\}$. We also utilize \emph{exponent notation} to represent repeated multiplication by one or several group elements, e.g. $(abc)^3 = (abc)(abc)(abc)$. 

We follow the standard combinatorial description of Coxeter groups (see Bj\"{o}rner and Brenti \cite{BjornerBrenti}). Presentations for \emph{Coxeter groups} are efficiently recorded in a \emph{Coxeter diagram} that is a graph whose vertices are labeled with the generators of the group $S=\{s_i\mid i \in [n-1]\}$ and an edge $(s_i, s_j)$ is present if $(s_is_j)^{m_{i,j}}$ is a relator with $m_{i,j} \geq 3$. If $m_{i,j}>3$ then the edge is labeled with the value $m_{i,j}$. If there is not and edge between $s_i,s_j$ then $(s_is_j)^2$ is a relator. The only other relators for Coxeter group are that $s_i^2$ for any generator $s_i \in S$. The Coxeter diagrams for type $A_{n-1}$, $\bn$, and $\dn$ Coxeter groups are given in Figure~\ref{f:coxdiag}. The generators $s_i$, for $i \in [n-1]$, are identified with the \emph{adjacent transpositions}, i.e.~permutations $(i, i+1)$ in cycle notation. The generator $s_0$ is identified with the signed permutation $[-1, 2, 3, \ldots, n]$ in window notation. The generator $s'_0$ is identified with the signed permutation $[-2, -1, 3, 4, \ldots, n]$ in window notation. The explicit presentations of each type of group $\sn, \bn,$ and $\dn$ are given below.

For type $A_{n-1}$, the relators are:
\begin{align}
    \tag{Ca1} s_{i}^2,& \text{ for } i\in[n-1];\label{CA1}\\
    \tag{Ca2} (s_{i}s_{i+1})^3,& \text{ for } i\in[n-2]; \text{ and }\label{CA2}\\
    \tag{Ca3} (s_{i}s_{j})^2,& \text{ for } i\in[n-3] \text{ and } j\in[i+2,n-1]. \label{CA3}
\end{align}
Thus $\sn \cong \langle s_1, s_2, \ldots, s_{n-1} \mid (\ref{CA1}), (\ref{CA2}), (\ref{CA3})\rangle$.

For type $\bn$, the relators are:
\begin{align}
    \tag{Cb1} s_i,& \text{ for } i\in[0,n-1]; \label{Cb1}\\
    \tag{Cb2} (s_0s_1)^4;& \label{Cb2}\\
    \tag{Cb3} (s_is_{i+1})^3,& \text{ for } i\in[n-2]; \text{ and }\label{Cb3}\\
    \tag{Cb4} (s_is_j)^2,& \text{ for } i\in[0,n-3] \text{ and } j\in[i+2,n-1].\label{Cb4}
\end{align}
Thus $\bn \cong \langle s_0, s_1, \ldots, s_{n-1} \mid (\ref{Cb1}), (\ref{Cb2}), (\ref{Cb3}), (\ref{Cb4})\rangle$.

For type $\dn$, the relators are :
\begin{align}
    \tag{Cd1} s_i^2,& \text{ for } i\in[n-1]; \label{Cd1}\\
    \tag{Cd2} s_0'^2;& \label{Cd2}\\
    \tag{Cd3} (s_0's_2)^3;& \label{Cd3}\\
    \tag{Cd4} (s_is_{i+1})^3,& \text{ for } i\in[n-2] \label{Cd4}\\
    \tag{Cd5} (s_0's_i)^2,& \text{ for } i\in\{1\}\cup[3,n-1] \text{ and } \label{Cd5}\\
    \tag{Cd6} (s_is_j)^2,& \text{ for } i\in[n-1] \text{ and } j\in[i+2,n-1]. \label{Cd6}
\end{align}
Thus $\dn \cong \langle s_0', s_1, s_2, \ldots, s_{n-1} \mid (\ref{Cd1}), (\ref{Cd2}), (\ref{Cd3}), (\ref{Cd4}), (\ref{Cd5}), (\ref{Cd6})\rangle$.

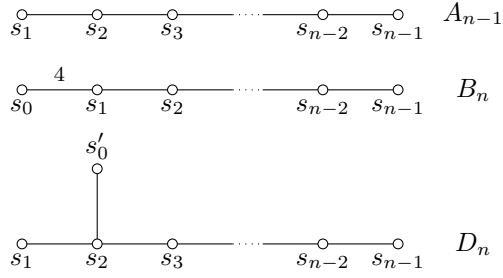
\begin{figure}\label{f:coxdiag}
    \centering
    \begin{tikzpicture}[scale=1]
        \draw (0,0) circle (0.065) node[anchor=north] {$s_1$};
        \draw (1,0) circle (0.065) node[anchor=north] {$s_2$};
        \draw (2,0) circle (0.065) node[anchor=north] {$s_3$};
        \draw (4,0) circle (0.065) node[anchor=north] {$s_{n-2}$};
        \draw (5,0) circle (0.065) node[anchor=north] {$s_{n-1}$};
        \draw (0.065,0)--(0.935,0) (1.065,0)--(1.935,0) (2.065,0)--(2.8,0) (3.2,0)--(3.935,0) (4.065,0)--(4.935,0);
        \draw[dotted] (2.8,0)--(3.2,0);
        \draw (6,0) node {$A_{n-1}$};
        \def\ra{-1}
        \draw (0,\ra) circle (0.065) node[anchor=north] {$s_0$};
        \draw (1,\ra) circle (0.065) node[anchor=north] {$s_1$};
        \draw (2,\ra) circle (0.065) node[anchor=north] {$s_2$};
        \draw (4,\ra) circle (0.065) node[anchor=north] {$s_{n-2}$};
        \draw (5,\ra) circle (0.065) node[anchor=north] {$s_{n-1}$};
        \draw (0.065,\ra)--(0.935,\ra) (1.065,\ra)--(1.935,\ra) (2.065,\ra)--(2.8,\ra) (3.2,\ra)--(3.935,\ra) (4.065,\ra)--(4.935,\ra);
        \draw[dotted] (2.8,\ra)--(3.2,\ra);
        \draw (0.5,\ra) node[anchor=south] {\footnotesize{4}};
        \draw (6,\ra) node {$\bn$};
        \def\ra{-3.05}
        \draw (1,\ra+1) circle (0.065) node[anchor=south] {$s'_0$};
        \draw (0,\ra) circle (0.065) node[anchor=north] {$s_1$};
        \draw (1,\ra) circle (0.065) node[anchor=north] {$s_2$};
        \draw (2,\ra) circle (0.065) node[anchor=north] {$s_3$};
        \draw (4,\ra) circle (0.065) node[anchor=north] {$s_{n-2}$};
        \draw (5,\ra) circle (0.065) node[anchor=north] {$s_{n-1}$};
        \draw (0.065,\ra)--(0.935,\ra) (1.065,\ra)--(1.935,\ra) (2.065,\ra)--(2.8,\ra) (3.2,\ra)--(3.935,\ra) (4.065,\ra)--(4.935,\ra) (1,\ra+0.065)--(1,\ra+0.935);
        \draw[dotted] (2.8,\ra)--(3.2,\ra);
        \draw (6,\ra) node {$\dn$};
    \end{tikzpicture}
    \caption{Coxeter diagrams of $A_{n-1}, \bn,$ and $\dn$}
\end{figure}

In the symmetric group $\sn$, for each $i\in[2,n]$, the \emph{prefix reversal} $r_i$ is identified with the permutation $[i, (i-1), \ldots, 1, (i+1), (i+2), \ldots, n]$ in one-line notation. In the hyperoctahedral group $\bn$, for each $i\in[n]$, the \emph{signed prefix reversal} $r_i$ is identified with the signed permutation $[-i, -(i-1), \ldots, -1, (i+1), (i+2), \ldots, n]$ in window notation. We overused the name for these reversal to ease notation, however, we shall make clear which is appropriate in each section.

\section{Presentation of $A_{n-1} = \sn$}\label{s:snpresentation}

The main result of this section is a presentation for the symmetric group of degree $n$ using prefix reversals as a generator set.

\begin{thm}\label{t:snpres}
    A presentation for the symmetric group of degree $n>3$ has generators $\left\{r_{2},r_{3},\ldots,r_{n}\right\}$% \mid k \in [n-1]\right\}$
    and complete set of relators 
    \begin{align}
        \tag{R1} r_{k}^2 &,\quad \text{for } k \in [2,n]; \label{R1}\\
        \tag{R2} \left(r_{2}r_{3}\right)^3&; \label{R2}\\
        %\left(f_{i}f_{1}f_{2}f_{i}\right)^3 &= e;\quad i \in [2,n-1].\label{R2}\\
        \tag{R3} \left(r_{2}r_{k}\right)^4&,\quad \text{for } k \in [4,n]; \label{R3}\\
       %%REMOVE old R4 take \ell=k+1 and get new 4%% \tag{R4} f_{k+1}f_{2}f_{1}f_{2}f_{k+1} = f_{k}f_{1}f_{k}&,\quad \text{for } k \in [2,n-2]; \label{R4}\\
        %\left(f_{j}f_{j-i+1}f_{1}f_{j}\right)^4 &= e;\quad j \in [3,n-1]\text{ and } i \in [2, \lfloor j/2 \rfloor].\label{R4}\\
        \tag{R4} r_{\ell}r_{\ell-k+2}r_{2}r_{\ell-k+2}r_{\ell} r_{k}r_{2}r_{k}&, \quad \text{for } \ell \in [4,n] \text{ and } k \in [3,\ell-1]; \label{R5}\\
        %f_{k+\ell-1}f_{\ell}f_{1}f_{\ell}f_{k+\ell-1} = f_{k}f_{1}f_{k}&, \quad \text{for } k \in [3, \lceil (n-1)/2 \rceil]\text{ and } \ell \in [2, n-2]; \label{R5}\\
        %\left(f_{j}f_{1}f_{j-i+1}f_{j}\right)^4 &= e;\quad j \in [3,n-1]\text{ and }i \in [\lceil j/2 \rceil, j-1].\label{R5}\\
        \tag{R5} r_{k}r_{k-1}r_{k+1}r_{2} r_{k+1}r_{k}r_{k+1}&, \quad \text{for } k \in [3, n-1]; \text{ and} \label{R6}\\  
        %f_{i}f_{i-2}f_{i-1}f_{i-2}f_{i-1}f_{i}f_{i-2}f_{i} = f_{2}&,\quad \text{for } i \in [3,n-1]; \text{ and} \label{R6}\\
        \tag{R6} (r_{k}r_{k-1})^2r_{k+1}r_{3} r_{k+1}r_{k-1}r_{k+1}&, \quad \text{for } k \in [3, n-1]. \label{R7}
        %f_{i}f_{i-1}f_{i}f_{i-1}f_{i+1}f_{2} = f_{i+1}f_{i-1}f_{i+1}&, \quad \text{for } i \in [2, n-2]. \label{R7}
        %f_{n-1}f_{n-3}f_{n-2}f_{n-1}f_{n-2}f_{n-1} = f_{1}&. \label{R7}
    \end{align}
    
    That is $\sn \cong \left\langle r_{2}, r_{3}, \ldots r_{n} \mid (\ref{R1}), (\ref{R2}), \ldots, (\ref{R7}) \right\rangle$.
\end{thm}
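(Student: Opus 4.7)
The plan is to apply Tietze transformations~\ref{T1}--\ref{T4} to the presentation $\langle r_2, \ldots, r_n \mid \text{R1--R6}\rangle$ and identify it with the standard Coxeter presentation $\langle s_1, \ldots, s_{n-1} \mid (\ref{CA1}), (\ref{CA2}), (\ref{CA3})\rangle$ of $\sn$. Since the prefix reversals are known to generate $\sn$, and direct computation in one-line notation shows that each of R1--R6 evaluates to the identity permutation in $\sn$, there is a canonical surjection from the presented group onto $\sn$; the content of the theorem is that this surjection is also injective.

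The key intermediary is the identity $r_{i+1} r_2 r_{i+1} = (i, i+1)$ in $\sn$, valid because $r_{i+1}$ sends $1 \mapsto i+1$ and $2 \mapsto i$, so it conjugates the transposition $r_2 = (1,2)$ into the adjacent transposition $(i, i+1)$. Accordingly, I would use~\ref{T3} to adjoin new generators $s_i := r_{i+1} r_2 r_{i+1}$ for $i \in [1, n-1]$ (so that $s_1 = r_2$), and then derive the Coxeter relations~(\ref{CA1}), (\ref{CA2}), (\ref{CA3}) on the $s_i$'s from R1--R6. Here~(\ref{CA1}) is immediate from R1. For~(\ref{CA2}), the case $i = 1$ follows directly from R2 after using R1 to simplify; the general case $i \ge 2$ should be extracted from R5 by rewriting via $s_k = r_{k+1} r_2 r_{k+1}$ and cancelling with R1. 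For~(\ref{CA3}), the case $i = 1$ follows from R3 once one verifies, using R1 and R2, that $(r_2 r_{j+1})^2 = s_1 s_j$; the remaining long-range commutations should emerge from R4 and R6, whose free parameters $k, \ell$ compress many Coxeter commutations into single relators.

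Once the Coxeter relations are in hand, I would derive, inductively on $k$, an explicit formula $r_k = W_k$ expressing each prefix reversal as a word in $s_1, \ldots, s_{k-1}$. A natural choice, reflecting that $r_k r_{k-1}$ equals the cyclic rotation $(1, 2, \ldots, k) = s_1 s_2 \cdots s_{k-1}$ in $\sn$, is the recursion $r_k = (s_1 s_2 \cdots s_{k-1}) \, r_{k-1}$, with the base case $r_2 = s_1$ coming directly from the definition of $s_1$. Tietze move~\ref{T4} then eliminates each generator $r_k$ in favor of $W_k$, and~\ref{T2} removes R1--R6, which after these substitutions become consequences of~(\ref{CA1})--(\ref{CA3}). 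The remaining presentation is precisely the standard Coxeter presentation of $\sn$.

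The principal technical obstacle will be deriving~(\ref{CA2}) and~(\ref{CA3}) from R4--R6. Unlike the short, symmetric Coxeter relations, these relators are quite long and each carries free parameters that pack whole families of Coxeter identities into a single relator; teasing them apart into individual braid and commutation statements will require careful parameter-by-parameter word manipulations with R1--R3 serving as reduction rules. Once these derivations are complete, the remaining Tietze bookkeeping is essentially routine.
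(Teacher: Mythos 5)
Your proposal follows essentially the same route as the paper: adjoin $s_i = r_{i+1}r_2r_{i+1}$ by T3, derive the Coxeter relators (Ca1)--(Ca3), establish the rewriting $r_k = s_1(s_2s_1)\cdots(s_{k-1}\cdots s_1)$ (equivalent to your recursion $r_{k+1}=(s_1\cdots s_k)r_k$), and then eliminate the $r_k$ by T4 after verifying that (R1)--(R6) are in turn derivable from the Coxeter relators. The only discrepancy is in which relators carry which derivations --- the paper obtains the braid relations (Ca2) and the long-range commutations (Ca3) from the conjugation relator (R4) together with (R2) and (R3) respectively, reserving (R5) for the inductive rewriting of $r_{k+1}$ rather than for (Ca2) --- but that is bookkeeping, not substance.
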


Prior to proving this presentation is in fact that of $\sn$ we need a few preliminary results.

\begin{lem}\label{l:transition}
    For $n>3$ and assuming the relations (\ref{R1})-(\ref{R7}) on $\left\{r_{k} \mid k \in [2,n]\right\}$ in Theorem \ref{t:snpres}, that $s_{i-1}=r_{i}r_{2}r_{i}$ for all $i \in [2,n]$, and the standard relations on $\left\{s_{j} \mid j \in [n-1]\right\}$, then the following are true:
    \begin{align}
        r_{k} &= s_{1}(s_{2}s_{1})(s_{3}s_{2}s_{1}) \cdots (s_{k-1}s_{k-2} \cdots s_{3}s_{2}s_{1}) &\text{ for any } k \in [2,n];&\label{e:rewrite}\\
        r_{k+1}r_{k} &= s_{1}s_{2} \cdots s_{k-1}s_{k} &\text{ for any } k \in [2,n-1];&\label{e:oneapart}\\
        r_{k+2}r_{k} &= (s_{1}s_{2} \cdots s_{k} s_{k+1})(s_{1}s_{2} \cdots s_{k-1}s_{k}) &\text{ for any } k \in [2,n-2];&\label{e:twoapart}\\
        r_{k}r_{3}r_{k} &= s_{k-2}s_{k-1}s_{k-2} &\text{ for any } k \in [3,n].&\label{e:ktwok}
    \end{align}
\end{lem}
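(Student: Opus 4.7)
I prove the four identities in sequence, with (\ref{e:oneapart}) as the workhorse and (\ref{e:rewrite}), (\ref{e:twoapart}) as immediate consequences; (\ref{e:ktwok}) uses an additional conjugation rule extracted from relator (R4). For (\ref{e:oneapart}) I induct on $k$. For the base case $k = 2$, I have $s_1 s_2 = r_2 \cdot r_3 r_2 r_3$ by the defining substitution, and (R2) with (R1) gives $(r_2 r_3)^2 = (r_2 r_3)^{-1} = r_3 r_2$, so $s_1 s_2 = r_3 r_2$. For the inductive step at $k \in [3, n-1]$, I group (R5) as $r_k r_{k-1} \cdot (r_{k+1} r_2 r_{k+1}) \cdot r_k r_{k+1} = e$; right-multiplying by $r_{k+1} r_k$ and cancelling via (R1) yields $r_k r_{k-1} \cdot s_k = r_{k+1} r_k$ after substituting $s_k = r_{k+1} r_2 r_{k+1}$. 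The inductive hypothesis then gives $r_{k+1} r_k = s_1 \cdots s_{k-1} s_k$.

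Identity (\ref{e:rewrite}) follows by a second induction on $k$: inverting (\ref{e:oneapart}) and using (R1) gives $r_k r_{k+1} = s_k s_{k-1} \cdots s_1$, so $r_{k+1} = r_k \cdot s_k s_{k-1} \cdots s_1$, which combines with the inductive form of $r_k$ to produce the claimed product; the base case $r_2 = s_1$ is just the $i = 2$ instance of the defining substitution together with (R1). Identity (\ref{e:twoapart}) is immediate: inserting $r_{k+1}^2 = e$ gives $r_{k+2} r_k = (r_{k+2} r_{k+1})(r_{k+1} r_k)$, and (\ref{e:oneapart}) applied to each factor yields the claim.

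For (\ref{e:ktwok}) I first extract from (R4) the conjugation rule $r_\ell s_j r_\ell = s_{\ell - j}$ for $\ell \in [4,n]$ and $j \in [1, \ell - 1]$. Substituting $s_{\ell - k + 1} = r_{\ell - k + 2} r_2 r_{\ell - k + 2}$ and $s_{k-1} = r_k r_2 r_k$ into (R4) and setting $j = \ell - k + 1$ handles $j \in [2, \ell - 2]$; the endpoint cases $j = 1$ and $j = \ell - 1$ follow directly from the definition $s_{\ell - 1} = r_\ell r_2 r_\ell$ and (R1). For $k = 3$, (\ref{e:ktwok}) reduces via (R1) to $r_3 = s_1 s_2 s_1$, which is the $k = 3$ instance of (\ref{e:rewrite}). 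For $k \geq 4$, combining $r_3 = s_1 s_2 s_1$ with the conjugation rule yields
\[
r_k r_3 r_k = (r_k s_1 r_k)(r_k s_2 r_k)(r_k s_1 r_k) = s_{k-1} s_{k-2} s_{k-1} = s_{k-2} s_{k-1} s_{k-2},
\]
the last equality being the braid relation from the standard relations on $\{s_j\}$. The main obstacle is the careful index bookkeeping, particularly in extracting the conjugation rule from (R4) via the reparametrization $j = \ell - k + 1$ and in handling the endpoint values $j = 1$ and $j = \ell - 1$ separately.
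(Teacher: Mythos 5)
Your proof is correct. For the first three identities your route is essentially the paper's: both arguments hinge on relator (R4) of the theorem (the one reading $r_{k}r_{k-1}r_{k+1}r_{2}r_{k+1}r_{k}r_{k+1}$) to drive the induction; the paper inducts on the product formula (\ref{e:rewrite}) and reads off (\ref{e:oneapart}) afterwards, while you induct on (\ref{e:oneapart}) and recover (\ref{e:rewrite}) by appending $s_k s_{k-1}\cdots s_1 = (r_{k+1}r_k)^{-1}$ -- the same content in the opposite order, and your treatment of (\ref{e:twoapart}) by inserting $r_{k+1}^2$ is identical to the paper's. Where you genuinely diverge is (\ref{e:ktwok}): the paper proves it by a separate induction on $k$, expanding $r_{k+1}r_3r_{k+1}=(r_{k+1}r_k)\,r_kr_3r_k\,(r_kr_{k+1})$ and then performing a fairly long telescoping computation with braid and commutation relations (whose final displayed line, incidentally, carries an index slip -- it should read $s_{k-1}s_ks_{k-1}$). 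You instead observe that the relator $r_{\ell}r_{\ell-k+2}r_{2}r_{\ell-k+2}r_{\ell}r_{k}r_{2}r_{k}$, after the substitution $s_{i-1}=r_ir_2r_i$ and the reindexing $j=\ell-k+1$, is precisely the conjugation rule $r_\ell s_j r_\ell = s_{\ell-j}$ for $j\in[2,\ell-2]$, with the endpoints $j=1,\ell-1$ supplied by the defining substitution itself; applying this to $r_3=s_1s_2s_1$ and one braid relation finishes the claim in one line. Your index bookkeeping checks out ($k$ ranging over $[3,\ell-1]$ does give $j\in[2,\ell-2]$, and the case $k\ge 4$ of (\ref{e:ktwok}) only needs $j=1,2$, both available). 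This is cleaner than the paper's computation and has the added merit of exposing that this relator is exactly the commutation identity the paper later re-derives in its Lemma 3.3 from (\ref{e:rewrite}) and the Coxeter relations.
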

\begin{proof}
    For (\ref{e:rewrite}) we will proceed by strong induction on $k$. First note that when $k=2$, $s_{1}=r_{2}r_{2}r_{2}=r_{2}$. Also when $k=3$, $s_{1}s_{2}s_{1}=r_{2}(r_{3}r_{2}r_{3})r_{2}=r_{3}$. Suppose that (\ref{e:rewrite}) is true for all $k \leq \ell$. Thus $r_{k} = s_{1}(s_{2}s_{1})(s_{3}s_{2}s_{1}) \cdots (s_{k-1}s_{k-2} \cdots s_{3}s_{2}s_{1}) = r_{k-1} (s_{k-1}s_{k-2} \cdots s_{3}s_{2}s_{1})$. 
    
    Now consider $s_{1}(s_{2}s_{1})(s_{3}s_{2}s_{1}) \cdots (s_{\ell}s_{\ell-1} \cdots s_{3}s_{2}s_{1})$. By hypothesis we have 
    \begin{align*}
        s_{1}(s_{2}s_{1})(s_{3}s_{2}s_{1}) \cdots (s_{\ell}s_{\ell-1} \cdots s_{3}s_{2}s_{1}) &= r_{\ell} (s_{\ell})(s_{\ell-1} \cdots s_{3}s_{2}s_{1})\\
        &= r_{\ell} (r_{\ell+1}r_{2}r_{\ell+1}) (r_{\ell-1}r_{\ell})\\
        &= %(r_{\ell+2}r_{\ell+1}r_{\ell}r_{\ell+2}r_{2})r_{2}r_{\ell+2}r_{\ell}r_{\ell+1}\\
        r_{\ell+1},
    \end{align*}
    where the last equality is true by (\ref{R6}) when $k=\ell$.
    
    Furthermore, we see that  (\ref{e:oneapart}) is true as a consequence of (\ref{e:rewrite}), and (\ref{e:twoapart}) follows directly from (\ref{e:oneapart}) and that $r_{k+1}$ is its own inverse (\ref{R1}).
    
    Finally, to show (\ref{e:ktwok}) we proceed by induction on $k$. When $k=3$, $r_{3}r_{3}r_{3}=r_{3}=s_{1}s_{2}s_{1}$. Suppose that (\ref{e:ktwok}) is true for some $3 \leq k < n-1$. Now consider
    \begin{align*}
        r_{k+1}r_{3}r_{k+1} &= (r_{k+1} r_{k})r_{k} r_{3} r_{k}(r_{k} r_{k+1})\\
        &= (s_{1}s_{2} \cdots s_{k}) s_{k-2}s_{k-1}s_{k-2} (s_{k}s_{k-1} \cdots s_{1})\\
        &= (s_{1}s_{2} \cdots s_{k-1}) s_{k-2} (s_{k}s_{k-1}s_{k}) s_{k-2} (s_{k-1} s_{k-2} \cdots s_{1})\\
        &= (s_{1}s_{2} \cdots s_{k-1}) s_{k-2} (s_{k-1}s_{k}s_{k-1}) s_{k-2} (s_{k-1} s_{k-2} \cdots s_{1})\\
        &= (s_{1}s_{2} \cdots \cancel{s_{k-2}}) (\cancel{s_{k-2}} s_{k-1} \cancel{s_{k-2}}) s_{k} (\cancel{s_{k-2}} s_{k-1} \cancel{s_{k-2}}) (\cancel{s_{k-2}} s_{k-3} \cdots s_{1})\\
        &= \cancel{(s_{1}s_{2} \cdots s_{k-3})} s_{k-1} s_{k} s_{k-1} \cancel{(s_{k-3} s_{k-4} \cdots s_{1})}\\ 
        &= s_{k} s_{k+1} s_{k}.
    \end{align*}
\end{proof}

\begin{lem}\label{l:srcomm}
For $n>3$, $j \in [2,n-1]$, $i \in [1,j-1]$, $\ell \in [2,n]$, and $k \in [1, \ell-1]$, then 
\begin{align}
    s_{j} r_{i} &= r_{i} s_{j} \label{sjri} \text{ and }\\
    s_{k} r_{\ell} &= r_{\ell} s_{\ell-k} \label{skrl} 
\end{align}
follow from the identities 
\begin{align*}
        r_{i} = s_{1}(s_{2}s_{1})(s_{3}s_{2}s_{1}) \cdots (s_{i-1}s_{i-2} \cdots s_{2}s_{1}), \quad &\text{for } i \in [2,n];\\
        s_{i-1} = r_{i}r_{2}r_{i} \quad &\text{for } i \in [2,n];\\
        % s_{i}^2 = e, \quad &\text{for } i \in [n-1];\\
        % s_{i}s_{i+1}s_{i} = s_{i+1}s_{i}s_{i+1}, \quad &\text{for } i \in [n-2];\\
        % s_{i}s_{j} = s_{j}s_{i}, \quad &\text{for } |i-j| > 1; \text{ and }\\
        % r_{i}^2 = e, \quad &\text{for } i \in [2,n].
    \end{align*}
    and the relators (\ref{CA1})-(\ref{CA3}) and (\ref{R1}).
\end{lem}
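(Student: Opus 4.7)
The plan is to prove the two identities (\ref{sjri}) and (\ref{skrl}) separately. For (\ref{sjri}), observe that the assumed expression for $r_i$ is a word in $s_1, s_2, \ldots, s_{i-1}$ only. Since $j \geq i+1$, every index $m \leq i-1$ satisfies $|j-m| \geq 2$, so (\ref{CA3}) lets $s_j$ commute past every factor of $r_i$; this yields $s_j r_i = r_i s_j$ immediately (with the degenerate case $i=1$ corresponding to the empty product).

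For (\ref{skrl}) I would induct on $\ell$, with the base case $\ell = 2$ being trivial since $r_2 = s_1$. In the inductive step at level $\ell$, I split into two subcases. When $k = \ell - 1$, the given identity $s_{\ell-1} = r_\ell r_2 r_\ell = r_\ell s_1 r_\ell$ together with (\ref{R1}) gives $s_{\ell-1} r_\ell = r_\ell s_1$ at once. When $k \in [1, \ell-2]$, I peel off the last block of the assumed product formula to write $r_\ell = r_{\ell - 1} \cdot c$ with $c := s_{\ell-1} s_{\ell-2} \cdots s_1$. The inductive hypothesis applied to $r_{\ell-1}$ yields $s_k r_{\ell-1} = r_{\ell-1} s_{\ell-1-k}$, so matters reduce to the conjugation identity $s_{\ell-1-k} \cdot c = c \cdot s_{\ell-k}$.

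This last identity is the main computation. Writing $m = \ell-1-k$, I would push $s_m$ rightward through $c$: first commute it past $s_{\ell-1}, \ldots, s_{m+2}$ using (\ref{CA3}); then, at the point where $s_m$ meets $s_{m+1}$, apply the braid relation $s_m s_{m+1} s_m = s_{m+1} s_m s_{m+1}$ derived from (\ref{CA2}); finally commute the surviving right-hand $s_{m+1}$ past $s_{m-1}, \ldots, s_1$ via (\ref{CA3}) once more. The word that emerges is exactly $c \cdot s_{m+1} = c \cdot s_{\ell-k}$, which completes the induction.

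The main obstacle is boundary bookkeeping rather than any deep idea. The case $k = \ell-1$ has to be treated outside the inductive recursion because $s_{\ell-1}$ does not lie in the subgroup generated by $s_1, \ldots, s_{\ell-2}$, and the inductive hypothesis on $r_{\ell-1}$ therefore does not apply. Symmetrically, in the boundary subcase $k = 1$ (so $m = \ell-2$) the initial batch of commutations through $s_{\ell-1}, \ldots, s_{m+2}$ becomes vacuous, and one must verify that the braid relation still sits correctly at the very left of $c$. Once these edge cases are checked, the remainder of the argument is a routine sequence of braid and commutation moves sanctioned by (\ref{CA2}) and (\ref{CA3}).
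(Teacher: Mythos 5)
Your proposal is correct and follows essentially the same route as the paper: the commutation relators (\ref{CA3}) dispose of (\ref{sjri}) immediately, and (\ref{skrl}) is proved by induction on $\ell$ using one braid move from (\ref{CA2}) flanked by commutations. The only (cosmetic) difference is organizational: you peel off the trailing block $c=s_{\ell-1}\cdots s_1$ of $r_\ell=r_{\ell-1}c$ and reduce to the single conjugation identity $s_{\ell-1-k}\,c=c\,s_{\ell-k}$, which subsumes in one computation what the paper splits into a separate $k=1$ induction on $\ell$ followed by an induction on $k$ that pushes $s_{k+1}$ through the leading block of the reversed product form.
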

\begin{proof}
We begin with (\ref{sjri}). Suppose $j\in [2,n-1]$ and $i \in [1,j-1]$, then 
    
    \begin{align*}
        s_{j} r_{i} &= s_{j} [s_{1}(s_{2}s_{1}) \cdots (s_{i-1}s_{i-2} \cdots s_{1})] = [s_{1}(s_{2}s_{1}) \cdots (s_{i-1}s_{i-2} \cdots s_{1})] s_{j} = r_{i}s_{j}
    \end{align*}
    since $i-1 \leq j-2$ and $j - (i-1) \geq 2$.
    
To show (\ref{skrl}) we proceed by induction. First, we can verify the result is true for $\ell = 2$ and $k=1$.
  %  \begin{align*}
       $s_{1} r_{2} = s_{1} [s_{1}] = r_{2} s_{1}.$
   % \end{align*}

Assuming the result is true for some $\ell \in [2,n]$ with $k=1$, that is $s_{1} r_{\ell} = r_{\ell} s_{\ell-1}$. Now consider
    \begin{align*}
        s_{1} r_{\ell+1} &= s_{1} [s_{1}(s_{2}s_{1}) \cdots (s_{\ell}s_{\ell-1} \cdots s_{1})]\\
        &= s_{1} r_{\ell} (s_{\ell}s_{\ell-1} \cdots s_{1})\\
        &= r_{\ell} s_{\ell-1} (s_{\ell}s_{\ell-1}s_{\ell-2} \cdots s_{1})\\
        &= r_{\ell} (s_{\ell} s_{\ell-1} s_{\ell}) (s_{\ell-2} s_{\ell-3} \cdots s_{1})\\
        &= r_{\ell} (s_{\ell} s_{\ell-1} s_{\ell-2} \cdots s_{1}) s_{\ell}\\
        &= r_{\ell+1} s_{\ell}.
    \end{align*}

Finally, assuming for any $\ell \in [2,n]$ there is a $k \in [1, \ell-1]$ where $s_{k}r_{\ell} = r_{\ell}s_{\ell-k}$. Consider 
    \begin{align*}
        s_{k+1} r_{\ell} &= \boxed{s_{k+1}} [(s_{1}s_{2} \cdots s_{\ell-1})(s_{1}s_{2} \cdots s_{\ell-2}) \cdots (s_{1}s_{2}) s_{1}]\\
        &= (s_{1}s_{2} \cdots s_{k-1} \boxed{s_{k+1}} s_{k} s_{k+1} s_{k+2} \cdots s_{\ell-1}) (s_{1}s_{2} \cdots s_{\ell-2}) \cdots (s_{1}s_{2}) s_{1}\\
        &= (s_{1}s_{2} \cdots s_{k-1} (s_{k} s_{k+1} \boxed{s_{k}}) s_{k+2} \cdots s_{\ell-1}) (s_{1}s_{2} \cdots s_{\ell-2}) \cdots (s_{1}s_{2}) s_{1}\\
        &= (s_{1}s_{2} \cdots s_{\ell-1}) \boxed{s_{k}} (s_{1}s_{2} \cdots s_{\ell-2}) \cdots (s_{1}s_{2}) s_{1}\\
        &= r_{\ell}r_{\ell-1}s_{k}r_{\ell-1}\\
        &= r_{\ell}r_{\ell-1}r_{\ell-1} s_{\ell-k-1}\\
        &= r_{\ell}s_{\ell-k-1}.
    \end{align*}
\end{proof}

\noindent\emph{Proof of Theorem~\ref{t:snpres}.}
    The verification of this presentation is accomplished by using the Tietze transformations on the presentation above to recover the standard Coxeter group presentation of $\sn$, that is the presentation of $\sn$ with adjacent transpositions as generators.
    
    By T3, we can adjoin in the generators $\{s_{1}, s_{2}, \ldots, s_{n-1}\}$ and the relations 
    \begin{equation} \tag{R7} s_{i}=r_{i+1}r_{2}r_{i+1}, \text{ for } i \in [n-1].\label{R8}
    \end{equation} 
    Note that when $i=1$ the relator simplifies to $s_{1} = r_{2} r_{2} r_{2} = r_{2}$, by (\ref{R1}). 
    
    We now will adjoin the relators (\ref{CA1})-(\ref{CA3}).
    % \begin{align}
    %     \tag{CA1} s_{i}^2&, \text{ for } i \in [n-1];\label{CA1}\\
    %     \tag{CA2} (s_{i}s_{i+1})^3&, \text{ for } i \in [n-2];\text{ and }\label{CA2}\\ 
    %     \tag{CA3} (s_{i}s_{j})^2&, \text{ for } i \in [n-3] \text{ and } j \in [i+2, n-1].\label{CA3}
    % \end{align}
    However, we must verify that these new relators are derived from (\ref{R1})-(\ref{R8}).
    Trivially, we note that for $i \in [n-1]$,
    \begin{align*}
        s_i^2 &= (r_{i+1}r_{2}r_{i+1})^2=r_{i+1}r_{2}r_{i+1}r_{i+1}r_{2}r_{i+1}=e.
    \end{align*}
    Thus (\ref{CA1}) is derivable from (\ref{R1}) and (\ref{R8}).
    Furthermore, we see that 
    \begin{align*}
        (s_{1}s_{2})^3&=(r_{2}r_{3}r_{2}r_{3})^3\\
        &=(r_{2}r_{3})^6=e.
    \end{align*}
    As well as, for $i \in [2,n-2]$, 
    \begin{align*}
        (s_{i}s_{i+1})^3&=(r_{i+1}r_{2}r_{i+1}r_{i+2}r_{2}r_{i+2})^3\\
        &=(r_{i+2}r_{3}r_{2}r_{3}r_{i+2}r_{i+2}r_{2}r_{i+2})^3\\
        &=(r_{i+2}r_{3}r_{2}r_{3}r_{2}r_{i+2})^3\\
        &=r_{i+2}r_{3}r_{2}r_{3}r_{2}r_{i+2}r_{i+2}r_{3}r_{2}r_{3}r_{2}r_{i+2}r_{i+2}r_{3}r_{2}r_{3}r_{2}r_{i+2}\\
        &=r_{i+2}(r_{3}r_{2})^6r_{i+2}=e.
    \end{align*}
    Thus (\ref{CA2}) is derivable from (\ref{R2}), (\ref{R5}), and (\ref{R8}).
    Finally, for $i \in [n-3]$ and $j \in [i+2, n-1]$, 
    \begin{align*}
        (s_{i}s_{j})^2&=(r_{i+1}r_{2}r_{i+1}r_{j+1}r_{2}r_{j+1})^2\\
        &=(r_{i+1}r_{2}r_{2}r_{i+1}r_{j+1}r_{j-i+2}r_{2}r_{j-i+2}r_{2}r_{j+1})^2\\
        &=(r_{j+1}r_{j-i+2}r_{2}r_{j-i+2}r_{2}r_{j+1})^2\\
        &=r_{j+1}r_{j-i+2}r_{2}r_{j-i+2}r_{2}r_{j+1}r_{j+1}r_{j-i+2}r_{2}r_{j-i+2}r_{2}r_{j+1}\\
        &=r_{j+1}r_{j-i+2}r_{2}r_{j-i+2}r_{2}r_{j-i+2}r_{2}r_{j-i+2}r_{2}r_{j+1}\\
        &=r_{j+1}(r_{j-i+2}r_{2})^4r_{j+1}=e.
    \end{align*}
    Note that $i-j>2$, thus $i-j+1 \in [3,n-1]$. Therefore (\ref{CA3}) is derivable from (\ref{R3}), (\ref{R5}), and (\ref{R8}).
    
    By Lemma \ref{l:transition}, we see that the relations
    \begin{equation}
        \tag{R8} r_{k} = s_{1} (s_{2}s_{1}) (s_{3}s_{2}s_{1}) \cdots (s_{k-1}s_{k-2} \cdots s_{2}s_{1}),\label{R9} 
    \end{equation}
    for $k \in [2,n]$, are derivable from (\ref{R1})-(\ref{R8}). So by T1, we may adjoin them to our presentation.
    
    We now shall verify that (\ref{R1})-(\ref{R7}) are derivable from (\ref{CA1})-(\ref{CA3}) and (\ref{R8})-(\ref{R9}). 
    
    \noindent \emph{(\ref{R1}) is derivable from (\ref{CA1})-(\ref{CA3}) and (\ref{R8})-(\ref{R9}).}
    
    We will verify by induction on $k\in[2,n]$. Note $r_{2}^2 = s_{1}^2 = e$. So we assume the result, $r_{k}^2=e$, is true for some $k$. Furthermore we can conclude that \begin{equation*}
        r_{k} = s_{1} (s_{2}s_{1}) \cdots (s_{k-1}s_{k-2} \cdots s_{1}) = (s_{1}s_{2} \cdots s_{k-1}) \cdots (s_{1}s_{2}) s_{1}.
    \end{equation*}
    Now consider 
    \begin{align*}
        r_{k+1}^2 &= r_{k} (s_{k}s_{k-1} \cdots s_{1}) r_{k} (s_{k}s_{k-1} \cdots s_{1})\\
        &= r_{k} (s_{k}\cancel{s_{k-1} \cdots s_{1}}) \left[(\cancel{s_{1}s_{2} \cdots s_{k-1}})(s_{1}s_{2} \cdots s_{k-2}) \cdots (s_{1}s_{2}) s_{1}\right] (s_{k}s_{k-1} \cdots s_{1})\\
        &= r_{k} \cancel{s_{k}} \left[(s_{1}s_{2} \cdots s_{k-2}) \cdots (s_{1}s_{2})s_{1}\right] (\cancel{s_{k}}s_{k-1} \cdots s_{1})\\
        &= \left[s_{1}(s_{2}s_{1}) \cdots (s_{k-1} \cancel{s_{k-2} \cdots s_{1}})\right] (\cancel{s_{1}s_{2} \cdots s_{k-2}}) \cdots (s_{1}s_{2}) s_{1} (s_{k-1}s_{k-2} \cdots s_{1})\\
        &= \left[s_{1}(s_{2}s_{1}) \cdots (s_{k-2}s_{k-3} \cdots s_{1}) \cancel{s_{k-1}}\right] (s_{1}s_{2} \cdots s_{k-3})\cdots \\
        &\hspace{7.4cm}\cdots (s_{1}s_{2})s_{1} (\cancel{s_{k-1}}s_{k-2}\cdots s_{1})\\
        &= \left[s_{1}(s_{2}s_{1}) \cdots (s_{k-2}\cancel{s_{k-3} \cdots s_{1}})\right] (\cancel{s_{1}s_{2} \cdots s_{k-3}}) \cdots (s_{1}s_{2})s_{1} (s_{k-2}s_{k-3}\cdots s_{1})\\
         &\qquad\vdots \\
         &=[s_{1}(s_{2}s_{1})] s_{1} (s_{2}s_{1}) = e.
    \end{align*}
    
    \noindent \emph{(\ref{R2}) is derivable from (\ref{CA1})-(\ref{CA3}) and (\ref{R8})-(\ref{R9}).}
    \begin{align*}
        (r_{2}r_{3})^3 &= (s_{1}[s_{1}s_{2}s_{1}])^3\\
        &=(s_{2}s_{1})^3 = e.
    \end{align*}
    
    \noindent \emph{(\ref{R3}) is derivable from (\ref{CA1})-(\ref{CA3}) and (\ref{R8})-(\ref{R9}).}
    
    Let $k \in [4,n]$ be arbitrary, then we have that
    \begin{align*}
        (r_{2}r_{k})^4 &= (r_{2}r_{k}r_{2}r_{k})^2\\
        &= (s_{1}s_{k-1})^2 = e.
    \end{align*}
    
    \noindent \emph{(\ref{R5}) is derivable from (\ref{CA1})-(\ref{CA3}) and (\ref{R8})-(\ref{R9}).}
    
    This has been verified in Lemma \ref{l:srcomm}.
    
    \noindent \emph{(\ref{R6}) is derivable from (\ref{CA1})-(\ref{CA3}) and (\ref{R8})-(\ref{R9}).}
    
    We will verify the result by induction upon $k \in [3,n-1]$. When $k=3$ then the associated relator is 
    \begin{align*}
        r_{3}r_{2}r_{4}r_{2}r_{4}r_{3}r_{4} &= [s_{1}s_{2}s_{1}][s_{1}][s_{3}][s_{1}s_{2}s_{1}][s_{1}s_{2}s_{1}s_{3}s_{2}s_{1}]=e.
    \end{align*}
    Assuming the result is true for some $k \in [3,n-1]$, consider $k+1$.
    \begin{align*}
        r_{k+1}r_{k}r_{k+2}r_{2}r_{k+2}r_{k+1}r_{k+2} &= (r_{k}r_{k-1}r_{k+1}r_{2}r_{k+1})r_{k+2}r_{2}r_{k+2}r_{k+1}r_{k+2}\\
        &= (s_{1}s_{2} \cdots s_{k-1}) s_{k} s_{k+1} (s_{k+1}s_{k} \cdots s_{1}) = e.
    \end{align*}
    
    \noindent \emph{(\ref{R7}) is derivable from (\ref{CA1})-(\ref{CA3}) and (\ref{R8})-(\ref{R9}).}
    
    We will verify the result by induction upon $k \in [3,n-1]$. When $k=3$ then the associated relator is
    \begin{align*}
        r_{3}r_{2}r_{3}r_{2}r_{4}r_{3}r_{4}r_{2}r_{4} &= s_{2}s_{1}(s_{1}s_{2}s_{1}s_{3}s_{2}s_{1})(s_{1}s_{2}s_{1})s_{3}\\
        &=s_{1}s_{3}s_{1}s_{3} = e.
    \end{align*}
    Assuming the result is true for some $k \in [3, n-1]$, consider $k+1$.
    \begin{align*}
        (r_{k+1}r_{k})(r_{k+1}r_{k})r_{k+2}r_{3}r_{k+2}[r_{k}r_{k+2}] &= (s_{1}s_{2} \cdots s_{k}) (s_{1}s_{2} \cdots s_{k}) s_{k}s_{k+1}s_{k}\\ &\qquad [(s_{k}s_{k-1} \cdots s_{1})(s_{k+1}s_{k} \cdots s_{1})]\\
        &= (s_{1}s_{2} \cdots s_{k})(s_{1}s_{2} \cdots s_{k-1}) \cancel{s_{k+1}}\\ &\qquad (s_{k-1}s_{k-2} \cdots s_{1})(\cancel{s_{k+1}}s_{k} \cdots s_{1})\\
        &= (s_{1}s_{2} \cdots s_{k})(s_{1}s_{2} \cdots s_{k-1}) \\ &\qquad (s_{k-1}s_{k-2} \cdots s_{1})(s_{k} s_{k-1} \cdots s_{1}) = e.
    \end{align*}
    
    Since (\ref{R1})-(\ref{R7}) are derivable from (\ref{CA1})-(\ref{CA3}) and (\ref{R8})-(\ref{R9}), then by T2 they may be removed. Furthermore, by T4 we can remove the generators $\{r_{k} \mid k \in [2,n]\}$, replace $r_{k}$ using (\ref{R9}) in (\ref{R8}), and remove the relations (\ref{R9}). Specifically with this replacement in $(\ref{R8})$ we have 
    \begin{align*}
        s_{i} &= [ (s_{1}s_{2} \cdots s_{i}) (s_{1}s_{2} \cdots s_{i-1}) \cdots (s_{1}s_{2}) s_{1}] s_{1} [s_{1}(s_{2}s_{1}) \cdots (s_{i}s_{i-1} \cdots s_{1})] \\
        &= (s_{1}s_{2} \cdots s_{i}) (s_{1}s_{2} \cdots s_{i-1}) \cdots (s_{1}s_{2}s_{3}) (s_{1}s_{2} s_{1}s_{2}s_{1}) (s_{3}s_{2}s_{1}) \cdots (s_{i}s_{i-1} \cdots s_{1})\\
        &= (s_{1}s_{2} \cdots s_{i}) (s_{1}s_{2} \cdots s_{i-1}) \cdots (s_{1}s_{2}s_{3}) (s_{2}) (s_{3}s_{2}s_{1}) \cdots (s_{i}s_{i-1} \cdots s_{1})\\
        &\qquad\vdots\\
        &= (s_{1}s_{2} \cdots s_{i}) s_{i-1} (s_{i}s_{i-1} \cdots s_{1}) = s_{i},
    \end{align*}
    which is trivial and may also be removed. Therefore, we are finally left with the presentation 
    \begin{equation*}
        \left\langle s_{1}, s_{2}, \ldots, s_{n-1} \mid (\ref{CA1}), (\ref{CA2}), (\ref{CA3}) \right\rangle,
    \end{equation*}
    which is precisely the standard Coxeter group presentation of $\sn$.
\qedsymbol

\section{Presentation of $\bn$}\label{s:bnpresentation}

Throughout this section the reversals are signed, i.e. \[r_{k} = [-k, -(k-1), \ldots, -1, k+1, k+2, \ldots, n] \in \bn.\] The main result of this section is the following presentation for $\bn$.

\begin{thm}\label{t:bnpres}
A presentation for the signed symmetric group of degree $n>3$ has generators $\left\{r_{1} ,r_{2} \ldots,r_{n} \right\}$% \mid k \in [n-1]\right\}$
    and complete set of relators 
    \begin{align}
        \tag{Rb1} \left(r_{k} \right)^2 &,\quad \text{for } k \in [n]; \label{Rb1}\\
        \tag{Rb2} (r_{2} r_{3} )^6 &; \label{Rb2}\\
        \tag{Rb3} \left(r_{1} r_{k} \right)^4&,\quad \text{for } k \in [2,n]; \label{Rb3}\\
        \tag{Rb4} \left(r_{1} r_{2} r_{1} r_{k} \right)^4&,\quad \text{for } k \in [4,n]; \label{Rb4}\\
        \tag{Rb5} \left(r_{k} r_{1} r_{k} r_{2} \right)^2&,\quad \text{for } k \in [3,n]; \label{Rb5}\\
        % \left(f_{\ell} f_{\ell-k+1} f_{1} f_{\ell-k+1} f_{\ell}  f_{k} f_{1} f_{k} &, \quad \text{for } \ell \in [3,n-1] \text{ and } k \in [2,\ell-1];
        \tag{Rb6} r_{k} r_{1} r_{2} r_{1} r_{k} r_{k+1} r_{2} r_{3} r_{2} r_{1} r_{k+1} &, \quad \text{for } k \in [2,n-1] \label{Rb6}\\
        % f_{0} f_{k} f_{k-1} f_{k+1} f_{0} f_{1}  f_{k+1} f_{k} f_{k+1} &, \quad \text{for } k \in [2, n-2];
        % (f_{0} f_{k} f_{k-1} )^2f_{k+1}  f_{0} f_{2}  f_{k+1} f_{k-1} f_{k+1} &, \quad \text{for } k \in [2, n-2].
        % 7/7/20 cyclically permuted r_{k} to the end.
        \tag{Rb7} r_{k+1} r_{1} r_{2} r_{1} r_{k+1} r_{k-1} r_{k} r_{k+1} r_{k} &, \quad \text{for } k \in [2,n-1]; \text{ and}\label{Rb8}\\
        \tag{Rb8} r_{k} r_{1} r_{2} r_{1} r_{k} r_{\ell} r_{\ell-k+2} r_{1} r_{2} r_{1} r_{\ell-k+2} r_{\ell} &, \quad \text{for } k \in [2,n-2] \text{ and } \ell \in [k+2,n].  \label{Rb7} 
    \end{align}
    
    That is $\bn \cong \left\langle r_{1} , r_{2} , \ldots, r_{n}  \mid (\ref{Rb1}), (\ref{Rb2}), \ldots, (\ref{Rb7}) \right\rangle$.
\end{thm}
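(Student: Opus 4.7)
The proof will mirror the Tietze-transformation strategy employed for Theorem~\ref{t:snpres}. The appropriate substitution in this signed setting is
\[
    s_0 = r_1, \qquad s_i = r_{i+1}\, r_1 r_2 r_1\, r_{i+1} \text{ for } i \in [n-1],
\]
which is the $\bn$ analogue of $s_{i-1} = r_i r_2 r_i$ used in the $\sn$ case. The substring $r_1 r_2 r_1$ replaces the single $r_2$ because, in $\bn$, the signed reversal $r_2 = [-2,-1,3,\ldots,n]$ flips signs; one checks directly that $r_1 r_2 r_1$ equals the unsigned adjacent transposition $(1,2)$, i.e.\ $s_1$, so conjugating $r_{i+1}$ by $r_1 r_2 r_1$ (rather than by $r_2$) recovers the sign-less transposition $(i,i+1)$.

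The heart of the proof is an analogue of Lemma~\ref{l:transition} showing that, modulo (Rb1)--(Rb8) and the substitution above,
\[
    r_k \;=\; (s_0)(s_1 s_0)(s_2 s_1 s_0) \cdots (s_{k-1} s_{k-2} \cdots s_1 s_0) \qquad \text{for all } k \in [n],
\]
together with the companion identities $r_{k+1} r_k = s_0 s_1 \cdots s_k$, $r_k r_{k+1} = s_k s_{k-1} \cdots s_0$, and a $\bn$ analogue of Lemma~\ref{l:srcomm}, namely $r_\ell s_j r_\ell = s_{\ell-j}$ for $1 \le j \le \ell-1$. These I would prove by induction on $k$ (respectively $\ell$). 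The three complex-looking relations (Rb6), (Rb7), and (Rb8) are precisely the bridge identities that drive the inductions: under the substitution they reduce, respectively, to $r_{k+1} s_2 r_{k+1} = s_{k-1}$ (using the auxiliary identity $r_2 r_3 r_2 r_1 = s_2$), to $r_{k-1} r_k r_{k+1} r_k = s_k$ (which provides the crucial step $r_k \mapsto r_{k+1}$ in the product formula), and to the general conjugation $r_\ell s_j r_\ell = s_{\ell-j}$.

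With those formulas in hand, the remainder is bookkeeping. By T3 adjoin the generators $s_0, s_1, \ldots, s_{n-1}$ and their defining equations; verify that each of the Coxeter relators (\ref{Cb1})--(\ref{Cb4}) follows from (Rb1)--(Rb8) using the derived rewriting and conjugation identities---for instance, $(s_0 s_1)^4 = (r_1 r_2)^{12}$ collapses via (\ref{Rb3}) with $k=2$, the far-commutation $(s_i s_j)^2$ reduces via (Rb8) in the same manner that (\ref{CA3}) reduced via (\ref{R5}) in the $\sn$ proof, and the braid $(s_i s_{i+1})^3$ reduces by conjugating out shared $r_{i+2}$'s and invoking (\ref{Rb2}). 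Then by T1 adjoin the product formula for each $r_k$; by T2 delete (Rb1)--(Rb8) after verifying them one by one as consequences of the Coxeter relations, parallel to the verification carried out for (\ref{R1})--(\ref{R7}); and by T4 remove the generators $r_k$ together with their defining equations, leaving the standard presentation $\langle s_0, \ldots, s_{n-1} \mid (\ref{Cb1}),(\ref{Cb2}),(\ref{Cb3}),(\ref{Cb4})\rangle$ of $\bn$.

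The main obstacle will be the inductive proof of the $\bn$ analogue of Lemma~\ref{l:transition}. Because $r_\ell$ does not send $s_0$ to another generator---conjugation by $r_\ell$ produces the long word $s_{\ell-1} \cdots s_1 s_0 s_1 \cdots s_{\ell-1}$ representing the sign-flip at position $\ell$---the three bridge relations (Rb6), (Rb7), and (Rb8) must all be invoked together, with careful index bookkeeping, before the induction can close. The converse derivation of (Rb1)--(Rb8) from the Coxeter relations will be lengthy but conceptually parallel to the corresponding step in the $\sn$ proof; I expect no genuinely new ideas will be needed there, only diligent tracking of sign contributions produced by $r_1$.
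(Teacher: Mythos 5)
Your proposal follows essentially the same route as the paper's proof: the same substitution $s_0=r_1$ and $s_i=r_{i+1}r_1r_2r_1r_{i+1}$, the same product formula $r_k=s_0(s_1s_0)\cdots(s_{k-1}\cdots s_0)$ (the paper's Lemma~\ref{l:btransition}), the same conjugation identity $r_\ell s_j r_\ell = s_{\ell-j}$ (Lemma~\ref{l:srcommb}), the same reading of (\ref{Rb6}), (\ref{Rb8}), and (\ref{Rb7}) as the bridge identities, and the same Tietze bookkeeping. The one small inaccuracy is your claim that the braid relator $(s_is_{i+1})^3$ collapses via (\ref{Rb2}): in fact it reduces to $r_{i+2}(r_2r_3r_1)^3r_{i+2}$, and the needed identity $(r_2r_3r_1)^3=e$ is extracted from (\ref{Rb5}) and (\ref{Rb6}) in a separate auxiliary lemma (the paper's Lemma~\ref{l:little}), not from $(r_2r_3)^6$.
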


In order to prove this truly is a presentation of $\bn$ we need the following results established.

\begin{lem}\label{l:little}
For $n>3$ and assuming the relations (\ref{Rb1})-(\ref{Rb8}) on $\{r_{k} \mid k \in [n] \}$ in Theorem \ref{t:bnpres}, then 
    \[
        (r_{2}r_{3}r_{1})^3 = e.
    \]
\end{lem}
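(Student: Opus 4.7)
The plan is to reduce $(r_2 r_3 r_1)^3$ to $(r_3 r_2 r_3 r_1)^2$ and then verify that the latter is the identity. First I extract two rewrite rules from the given relators. From $(\ref{Rb5})$ with $k=3$, namely $(r_3 r_1 r_3 r_2)^2 = e$, together with $(\ref{Rb1})$ (which makes $r_3 r_1 r_3$ an involution), I obtain that $r_2$ commutes with $r_3 r_1 r_3$; multiplying the commutation $r_3 r_1 r_3 r_2 = r_2 r_3 r_1 r_3$ on the right by $r_3$ yields the useful form $r_2 r_3 r_1 = r_3 r_1 r_3 r_2 r_3$. Separately, I simplify $(\ref{Rb6})$ with $k=2$: the subword $r_2 r_1 r_2 r_1 r_2$ collapses to $r_1 r_2 r_1$ via $(\ref{Rb3})$ with $k=2$ (which gives $r_1 r_2 r_1 r_2 = r_2 r_1 r_2 r_1$), turning the relator into $r_1 r_2 r_1 \cdot r_3 r_2 r_3 r_2 \cdot r_1 r_3 = e$, equivalently $r_3 r_2 r_3 r_2 = r_1 r_2 r_1 r_3 r_1$.

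With these two tools in hand I expand $(r_2 r_3 r_1)^3 = r_2 r_3 r_1 r_2 r_3 r_1 r_2 r_3 r_1$. Applying the first rewrite to the leftmost $r_2 r_3 r_1$ produces a word containing the substring $r_3 r_2 r_3 r_2$, which I eliminate via the second rewrite. After an $r_1 r_1$ cancellation the remaining word contains the substring $r_1 r_3 r_1 r_3$; I flip it to $r_3 r_1 r_3 r_1$ using $(\ref{Rb3})$ with $k=3$, and a final $r_1 r_1$ cancellation reveals the word $(r_3 r_2 r_3 r_1)^2$.

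To finish I verify $(r_3 r_2 r_3 r_1)^2 = e$ by regrouping its middle three letters: the expansion $r_3 r_2 r_3 r_1 r_3 r_2 r_3 r_1$ becomes $r_3 r_2 (r_3 r_1 r_3) r_2 r_3 r_1$, and the commutation of $r_2$ with $r_3 r_1 r_3$ from the first paragraph lets the two $r_2$'s meet and annihilate, leaving $r_3 r_3 r_1 r_3 r_3 r_1 = e$ by $(\ref{Rb1})$. I expect the main obstacle to be the bookkeeping in the middle paragraph, where three substitutions must be applied in the correct positions of an eleven-letter word and their cancellations tracked carefully; the surrounding steps are essentially mechanical.
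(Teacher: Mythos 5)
Your proof is correct, and it draws on exactly the same ingredients as the paper's: the commutation of $r_2$ with $r_3r_1r_3$ extracted from (\ref{Rb5}) with $k=3$, the relator (\ref{Rb6}) with $k=2$, the consequence $r_2r_1r_2r_1r_2=r_1r_2r_1$ of (\ref{Rb3}), and (\ref{Rb1}). The organization, however, is genuinely different. The paper cyclically permutes (\ref{Rb6}) to replace \emph{each} of the three factors $r_2r_3r_1$ by the eight-letter word $r_1r_2r_1r_2r_3r_2r_3r_2$, then collapses the resulting $24$-letter word using the commutation from (\ref{Rb5}) and $(r_1r_2)^4=e$. You instead substitute only once, using (\ref{Rb5}) to rewrite the leftmost $r_2r_3r_1$ as $r_3r_1r_3r_2r_3$ and (\ref{Rb6}) to eliminate the resulting subword $r_3r_2r_3r_2$, and then telescope down to the intermediate target $(r_3r_2r_3r_1)^2$, which is a conjugate rearrangement of (\ref{Rb5}) with $k=3$ and so collapses immediately by the same commutation. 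Your route keeps every intermediate word at eleven letters or fewer, at the cost of one extra appeal to (\ref{Rb3}) with $k=3$ (the step $r_1r_3r_1r_3=r_3r_1r_3r_1$), which the paper never needs; I have checked each of the substitutions and cancellations you describe, and they all go through.
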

\begin{proof}
    First we consider (\ref{Rb6}) when $k=2$,
    \[
        r_{2} r_{1} r_{2} r_{1} r_{2} r_{3} r_{2} r_{3} r_{2} r_{1} r_{3} = e.
    \]
    Cyclically permuting the generators gives that 
    \[
        r_{2} r_{3} r_{1} = r_{1} r_{2} r_{1} r_{2} r_{3} r_{2} r_{3} r_{2}.
    \]
    Second we can see from (\ref{Rb5}) when $k=3$,
    \[
        r_{3} r_{1} r_{3} r_{2} r_{3} r_{1} r_{3} r_{2} = e,
    \]
    or equivalently 
    \[
        r_{3} r_{2} r_{3} r_{1} = r_{1} r_{3} r_{2} r_{3}.
    \]
    Finally,
    \begin{align*}
        (r_{2} r_{3} r_{1})^3 &= (r_{1} r_{2} r_{1} r_{2} r_{3} r_{2} r_{3} r_{2}) (r_{1} r_{2} r_{1} r_{2} r_{3} r_{2} r_{3} r_{2}) (r_{1} r_{2} r_{1} r_{2} r_{3} r_{2} r_{3} r_{2})\\
        &= r_{1} r_{2} r_{1} r_{2} r_{3} r_{2} r_{3} (r_{1} r_{2} r_{1}) r_{3} r_{2} r_{3} (r_{1} r_{2} r_{1}) r_{3} r_{2} r_{3} r_{2}\\
        &= r_{1} r_{2} r_{1} r_{2} (r_{1} r_{3} r_{2} r_{3}) r_{2} (r_{3} r_{2} r_{3} \cancel{r_{1}}) \cancel{r_{1}} r_{2} (r_{3} r_{2} r_{3} r_{1}) r_{2}\\
        &= r_{1} r_{2} r_{1} r_{2} r_{1} (r_{2}) r_{1} r_{2} = e.
    \end{align*}
\end{proof}

\begin{lem}\label{l:btransition}
For $n>3$ and assuming the relations (\ref{Rb1})-(\ref{Rb8}) on $\{r_{k} \mid k \in [n] \}$ in Theorem \ref{t:bnpres}, $s_{0}=r_{1}$, $s_{i}=r_{i+1}r_{1}r_{2}r_{1}r_{i+1}$ for $i\in[n-1]$, and (\ref{Cb1})-(\ref{Cb4}) on $\{s_{i} \mid i \in [0,n-1]\}$, then
    \begin{equation*}
        r_{k} = s_{0}(s_{1}s_{0})(s_{2}s_{1}s_{0})\cdots(s_{k-1}s_{k-2} \cdots s_{0}), \text{ for } k \in [n].
    \end{equation*}
\end{lem}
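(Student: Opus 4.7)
The plan is to proceed by strong induction on $k$, directly paralleling the proof of (\ref{e:rewrite}) in Lemma~\ref{l:transition}. The base case $k=1$ is trivial since the formula reduces to $r_1 = s_0$, which is the given definition. For $k=2$ the formula reads $s_0(s_1 s_0) = r_1 \cdot (r_2 r_1 r_2 r_1 r_2) \cdot r_1 = r_1 r_2 r_1 r_2 r_1 r_2 r_1$, and this equals $r_2$ because $(r_1 r_2)^4 = e$ by (\ref{Rb3}) with $k=2$, together with $r_2^2 = e$ from (\ref{Rb1}).

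For the inductive step, I would assume the identity holds for every index at most $k$, with $k \ge 2$, and split the target product as
\[
s_0(s_1 s_0) \cdots (s_k s_{k-1} \cdots s_0) = \bigl[s_0(s_1 s_0) \cdots (s_{k-1} \cdots s_0)\bigr] \cdot (s_k s_{k-1} \cdots s_0).
\]
By the inductive hypothesis at level $k$ the bracketed piece is $r_k$. Comparing the inductive hypothesis at levels $k-1$ and $k$ gives $r_k = r_{k-1}(s_{k-1} \cdots s_0)$, and (\ref{Rb1}) then yields $(s_{k-1} \cdots s_0) = r_{k-1} r_k$. Substituting this, and unpacking $s_k = r_{k+1} r_1 r_2 r_1 r_{k+1}$, the product rewrites as $r_k r_{k+1} r_1 r_2 r_1 r_{k+1} r_{k-1} r_k$. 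It remains to verify that this word equals $r_{k+1}$, which after multiplying by $r_k$ on the left and by $r_k$ on the right and using (\ref{Rb1}) is equivalent to $r_{k+1} r_1 r_2 r_1 r_{k+1} r_{k-1} r_k r_{k+1} r_k = e$ --- this is exactly relator (\ref{Rb7}) for $k \in [2, n-1]$.

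I expect the main obstacle to be the bookkeeping in the inductive step: specifically, recognizing that the trailing segment $(s_{k-1} \cdots s_0)$ must be rewritten as $r_{k-1} r_k$ via the inductive hypotheses at the two previous levels. Without that substitution the expression does not match relator (\ref{Rb7}) verbatim. Once the rewriting is in place the induction closes immediately, and no relators from Theorem~\ref{t:bnpres} beyond (\ref{Rb1}), (\ref{Rb3}), and (\ref{Rb7}) are required.
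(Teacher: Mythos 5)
Your argument is correct and is essentially the paper's own proof: the same strong induction with base cases $k=1$ and $k=2$ via (\ref{Rb3}), the same rewriting of the tail $(s_{k-1}\cdots s_{0})$ as $r_{k-1}r_{k}$ using two consecutive inductive hypotheses, and the same appeal to the relator $r_{k+1}r_{1}r_{2}r_{1}r_{k+1}r_{k-1}r_{k}r_{k+1}r_{k}$ to close the induction. The only caveat is a quirk of the paper itself: its \LaTeX{} tags and labels for the seventh and eighth relators of Theorem~\ref{t:bnpres} are swapped, so the relator you wrote out explicitly is the one displayed as (Rb7) but stored under the label \texttt{Rb8}; your mathematics is unaffected.
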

\begin{proof}
    We proceed by strong induction. By hypothesis $s_{0}=r_{1}$. By (\ref{Rb3}), $r_{2}=r_{1}(r_{2}r_{1}r_{2}r_{1}r_{2})r_{1} = s_{0}s_{1}s_{0}$. Assuming the result is true for all $j \leq k$, that is $r_{j} = s_{0}(s_{1}s_{0})(s_{2}s_{1}s_{0})\cdots(s_{j-1}s_{j-2}\cdots s_{0})$. Furthermore, $s_{j-1}s_{j-2}\cdots s_{0}=r_{j-1}r_{j}$. Now consider
    \begin{align*}
        s_{0}(s_{1}s_{0})(s_{2}s_{1}s_{0})\cdots
        %(s_{0}s_{k-1}s_{k-2}\cdots s_{1})
        (s_{k}s_{k-1}\cdots s_{0}) &= r_{k} s_{k} (s_{k-1}\cdots s_{0})\\
        &= r_{k} (r_{k+1}r_{1}r_{2}r_{1}r_{k+1}) (r_{k-1}r_{k}) \\
        &= r_{k+1},
    \end{align*}
    where the last equality follows from (\ref{Rb8}).
\end{proof}

\begin{lem}\label{l:srcommb}
For $n>3$, $i \in [1,n-2]$, $j \in [i+2,n]$, $k \in [1, n-2]$, and $\ell \in [k+1,n]$, then 
\begin{align}
    s_{j} r_{i} &= r_{i} s_{j} \label{sjrib} \text{ and }\\
    s_{k} r_{\ell} &= r_{\ell} s_{\ell-k} \label{skrlb} 
\end{align}
follow from the identities 
\begin{align*}
        r_{i} = s_{0}(s_{1}s_{0})(s_{2}s_{1}s_{0}) \cdots (s_{i-1}s_{i-2} \cdots s_{1}s_{0}), \quad &\text{for } i \in [n];\\
        s_{0} = r_{1}; \quad &\\
        s_{i-1} = r_{i}r_{1}r_{2}r_{1}r_{i} \quad &\text{for } i \in [2,n];\\
        % s_{i}^2 = e, \quad &\text{for } i \in [0,n-1];\\
        % s_{0}s_{1}s_{0}s_{1} = s_{1}s_{0}s_{1}s_{0}; \quad &\\
        % s_{i}s_{i+1}s_{i} = s_{i+1}s_{i}s_{i+1}, \quad &\text{for } i \in [n-2];\\
        % s_{i}s_{j} = s_{j}s_{i}, \quad &\text{for } |i-j| > 1; \text{ and }\\
        % r_{i}^2 = e, \quad &\text{for } i \in [n].
    \end{align*}
    and the relators (\ref{Cb1})-(\ref{Cb4}) and (\ref{Rb1}).
\end{lem}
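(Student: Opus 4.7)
My plan is to mirror the proof of Lemma~\ref{l:srcomm}, making minor adjustments for the generator $s_0$, which appears in every expansion of $r_i$ and satisfies the fourth-order braid relation with $s_1$.

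For~(\ref{sjrib}), I would substitute the hypothesis $r_i=s_0(s_1s_0)(s_2s_1s_0)\cdots(s_{i-1}\cdots s_0)$ into $s_j r_i$. Every generator $s_m$ appearing in this expansion has index $m\in[0,i-1]$, and since $j\geq i+2$ we have $|j-m|\geq 3$. Repeated application of~(\ref{Cb4}) then slides $s_j$ past each $s_m$ (in particular past $s_0$, since $j\geq 3$), yielding $s_j r_i=r_i s_j$.

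For~(\ref{skrlb}) I would run the same nested induction as in Lemma~\ref{l:srcomm}. The base case $\ell=2$, $k=1$ asks for $s_1 s_0 s_1 s_0=s_0 s_1 s_0 s_1$, which is a direct consequence of~(\ref{Cb2}). With $k=1$ fixed, the step from $\ell$ to $\ell+1$ uses $r_{\ell+1}=r_\ell(s_\ell s_{\ell-1}\cdots s_0)$, the inductive hypothesis, the braid $s_{\ell-1}s_\ell s_{\ell-1}=s_\ell s_{\ell-1}s_\ell$ from~(\ref{Cb3}), and the commutations of $s_\ell$ with $s_{\ell-2},\ldots,s_0$ from~(\ref{Cb4}) (valid throughout because $\ell\geq 2$). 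For the induction on $k$ at a fixed $\ell$, I would first establish the auxiliary factorization $r_\ell=(s_0 s_1\cdots s_{\ell-1})\,r_{\ell-1}$ by a brief induction on $\ell$: starting from the descending form $r_\ell=r_{\ell-1}(s_{\ell-1}\cdots s_0)$, one slides $s_{\ell-1}$ across $r_{\ell-2}$ by the same commutation argument that underlies~(\ref{sjrib})---every generator of $r_{\ell-2}$ has index at most $\ell-3$, which differs from $\ell-1$ by at least $2$---and then re-collects using the descending form for $r_{\ell-1}$. Granting this factorization, the step from $k$ to $k+1$ proceeds exactly as in Lemma~\ref{l:srcomm}:
\begin{align*}
s_{k+1}r_\ell
&=s_{k+1}(s_0\cdots s_{\ell-1})r_{\ell-1}
=(s_0\cdots s_{k-1})\,s_{k+1}s_k s_{k+1}\,(s_{k+2}\cdots s_{\ell-1})\,r_{\ell-1}\\
&=(s_0\cdots s_{k-1})\,s_k s_{k+1}s_k\,(s_{k+2}\cdots s_{\ell-1})\,r_{\ell-1}
=(s_0\cdots s_{\ell-1})\,s_k\,r_{\ell-1}\\
&=r_\ell r_{\ell-1}\cdot r_{\ell-1}\,s_{\ell-1-k}
=r_\ell\,s_{\ell-k-1},
\end{align*}
using~(\ref{Cb4}) for the commutations, (\ref{Cb3}) for the braid, the inductive hypothesis $s_k r_{\ell-1}=r_{\ell-1}s_{\ell-1-k}$, and $r_{\ell-1}^2=e$.

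The main obstacle is the auxiliary factorization $r_\ell=(s_0 s_1\cdots s_{\ell-1})\,r_{\ell-1}$. In the $S_n$ analog this is essentially a notational rewrite among words that differ only by commutations of adjacent-transposition generators, but here the absence of a commutation between $s_0$ and $s_1$ makes the equivalence between the ascending and descending forms less immediate, and the brief induction on $\ell$ described above is needed to bridge them before the rest of the $S_n$-style computation can be transcribed.
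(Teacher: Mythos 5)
Your proposal is correct and follows essentially the same route as the paper: the identical commutation argument for (\ref{sjrib}) and the identical double induction (first on $\ell$ with $k=1$, then on $k$ using the hypothesis at $\ell-1$) for (\ref{skrlb}). The only cosmetic difference is that where you establish the factorization $r_\ell=(s_0s_1\cdots s_{\ell-1})\,r_{\ell-1}$ by a separate commutation induction, the paper simply rewrites $r_\ell$ as the reversed (descending) product $(s_0s_1\cdots s_{\ell-1})(s_0s_1\cdots s_{\ell-2})\cdots(s_0s_1)s_0$, which is legitimate because that word is the inverse of the ascending one and $r_\ell^2=e$ by (\ref{Rb1}).
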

\begin{proof}
We begin with (\ref{sjrib}). Suppose $i\in [1,n-2]$ and $j \in [i+2,n]$, then 
    
    \begin{align*}
        s_{j} r_{i} &= s_{j} [s_{0}(s_{1}s_{0}) \cdots (s_{i-1}s_{i-2} \cdots s_{0})] = [s_{0}(s_{1}s_{0}) \cdots (s_{i-1}s_{i-2} \cdots s_{0})] s_{j} = r_{i}s_{j}
    \end{align*}
    since $j \geq i+2$ and $j - (i-1) \geq 3$.
    
To show (\ref{skrlb}) we proceed by induction. First, we can verify the result is true for $k = 1$ and $\ell=2$.
  %  \begin{align*}
       $s_{1} r_{2} = s_{1} [s_{0}(s_{1}s_{0})] = s_{0}s_{1}s_{0}s_{1} = r_{2} s_{1}.$
   % \end{align*}

Assuming the result is true for some $\ell \in [2,n]$ with $k=1$, that is $s_{1} r_{\ell} = r_{\ell} s_{\ell-1}$. Now consider
    \begin{align*}
        s_{1} r_{\ell+1} &= s_{1} [s_{0}(s_{1}s_{0}) \cdots (s_{\ell}s_{\ell-1} \cdots s_{0})]\\
        &= s_{1} r_{\ell} (s_{\ell}s_{\ell-1} \cdots s_{0})\\
        &= r_{\ell} s_{\ell-1} (s_{\ell}s_{\ell-1}s_{\ell-2} \cdots s_{0})\\
        &= r_{\ell} (s_{\ell} s_{\ell-1} s_{\ell}) (s_{\ell-2} s_{\ell-3} \cdots s_{0})\\
        &= r_{\ell} (s_{\ell} s_{\ell-1} s_{\ell-2} \cdots s_{0}) s_{\ell}\\
        &= r_{\ell+1} s_{\ell}.
    \end{align*}

Finally, assuming for any $\ell \in [2,n]$ there is a $k \in [1, \ell-1]$ where $s_{k}r_{\ell} = r_{\ell}s_{\ell-k}$. Consider 
    \begin{align*}
        s_{k+1} r_{\ell} &= \boxed{s_{k+1}} [(s_{0}s_{1} \cdots s_{\ell-1})(s_{0}s_{1} \cdots s_{\ell-2}) \cdots (s_{0}s_{1}) s_{0}]\\
        &= (s_{0}s_{1} \cdots s_{k-1} \boxed{s_{k+1}} s_{k} s_{k+1}s_{k+2} \cdots s_{\ell-1}) (s_{0}s_{1} \cdots s_{\ell-2}) \cdots (s_{0}s_{1}) s_{0}\\
        &= (s_{0}s_{1} \cdots s_{k-1} (s_{k} s_{k+1} \boxed{s_{k}}) s_{k+2} \cdots s_{\ell-1}) (s_{0}s_{1} \cdots s_{\ell-2}) \cdots (s_{0}s_{1}) s_{0}\\
        &= (s_{0}s_{1} \cdots s_{\ell-1}) \boxed{s_{k}} (s_{0}s_{1} \cdots s_{\ell-2}) \cdots (s_{0}s_{1}) s_{0}\\
        &= r_{\ell}r_{\ell-1}s_{k}r_{\ell-1}\\
        &= r_{\ell}r_{\ell-1}r_{\ell-1} s_{\ell-k-1}\\
        &= r_{\ell}s_{\ell-k-1}.
    \end{align*}
\end{proof}

\noindent\emph{Proof of Theorem~\ref{t:bnpres}.}
    We follow the same process as Theorem \ref{t:snpres}, using Tietze transformations to recover the standard Coxeter group presentation of $B_n$.
    
    By T3, we can adjoin the generators $\{s_{1},s_{2},\ldots,s_{n-1}\}$ and the relations 
    \begin{equation}
        \tag{Rb9} s_{i-1} = r_{i} r_{1} r_{2} r_{1} r_{i} , \text{ for } i \in [2,n].\label{Rb9}
    \end{equation}
    Note that another generator, $s_{0}$, is also a standard generator, which is identified with $r_{1} $. That is, $s_{0}=r_{1} $. We will use either symbol interchangeably.
    
    We may also adjoin the relators (\ref{Cb1})-(\ref{Cb4}).
    % \begin{align}
    %     \tag{Cb1} s_{i}^2&, \text{ for } i \in [n-1];\label{Cb1}\\
    %     \tag{Cb2} (s_{0}s_{1})^4; \label{Cb2}\\
    %     \tag{Cb3} (s_{i}s_{i+1})^3&, \text{ for } i \in [n-2];\text{ and }\label{Cb3}\\ 
    %     \tag{Cb4} (s_{i}s_{j})^2&, \text{ for } i \in [0,n-3] \text{ and } j \in [i+2, n-1].\label{Cb4} 
    % \end{align}
    However, we must verify that these new relators are derived from (\ref{Rb1})-(\ref{Rb9}). Note for $i \in [n-1]$,
    \begin{equation*}
        s_{i}^2 = (r_{i+1} r_{1} r_{2} r_{1} r_{i+1} )^2 = r_{i+1} r_{1} r_{2} r_{1} r_{i+1} r_{i+1} r_{1} r_{2} r_{1} r_{i+1}  = e.
    \end{equation*}
    Thus (\ref{Cb1}) is derivable from (\ref{Rb1}). We can see (\ref{Cb2}) is derivable from (\ref{Rb3}) when $k=2$,
    \begin{equation*}
        (s_{0}s_{1})^4 = (r_{1} r_{2} r_{1} r_{2} r_{1} r_{2} )^4 = e.
    \end{equation*}
    We can see (\ref{Cb3}) is derivable as well from (\ref{Rb1})-(\ref{Rb3}), (\ref{Rb5})-(\ref{Rb6}), and (\ref{Rb9}). For $i \in [n-1]$,
    \begin{align*}
        (s_{i}s_{i+1})^3 &= (r_{i+1} r_{1} r_{2} r_{1} r_{1+1} r_{i+2} r_{1} r_{2} r_{1} r_{i+2} )^3\\
        &= ([r_{i+2} r_{2} r_{3} \cancel{r_{2} r_{1} r_{i+2}}] \cancel{r_{i+2} r_{1} r_{2}} r_{1} r_{i+2})^3\\
        &= (r_{i+2} r_{2} r_{3} r_{1} r_{i+2})^3\\
        &= r_{i+2} r_{2} r_{3} r_{1} \cancel{r_{i+2}} \cancel{r_{i+2}} r_{2} r_{3} r_{1} \cancel{r_{i+2}} \cancel{r_{i+2}} r_{2} r_{3} r_{1} r_{i+2}\\
        &= r_{i+2} (r_{2} r_{3} r_{1})^3 r_{i+2} = e,
    \end{align*}
    by Lemma \ref{l:little}.
    To show (\ref{Cb4}) is derivable we break into two cases, when $i=0$ and $i\neq0$.
    When $i=0$ and $j \in [2,n-1]$,
    \begin{align*}
        (s_{0} s_{j})^2 &= (r_{1} [r_{j+1} r_{1} r_{2} r_{1} r_{j+1}])^2\\
        &= (r_{j+1} r_{1} r_{j+1} r_{1} r_{j+1} r_{2} r_{1} r_{j+1})^2\\
        &= r_{j+1} r_{1} r_{j+1} r_{1} r_{j+1} r_{2} \cancel{r_{1} r_{j+1}} \cancel{r_{j+1} r_{1}} r_{j+1} r_{1} r_{j+1} r_{2} r_{1} r_{j+1}\\
        &= r_{j+1} r_{1} (r_{j+1} r_{1} r_{j+1} r_{2})^2 r_{1} r_{j+1} = e,
    \end{align*}
    where the first equality is due to (\ref{Rb9}), the second is due to (\ref{Rb3}) when $k=j+1$, and the last is due to (\ref{Rb1}) and (\ref{Rb4}).
    When $i\neq0$ and $j \in [i+2,n-1]$,
    \begin{align*}
        (s_{i}s_{j})^2 &= (r_{i+1} r_{1} r_{2} r_{1} r_{i+1} r_{j+1} r_{1} r_{2} r_{1} r_{j+1})^2\\
        &= ([r_{j+1} r_{j-i+2} r_{1} r_{2} r_{1} r_{j-i+2} \cancel{r_{j+1}}] \cancel{r_{j+1}} r_{1} r_{2} r_{1} r_{j+1})^2\\
        &= (r_{j+1} (r_{j-i+2} r_{1} r_{2} r_{1})^2 r_{j+1})^2\\
        &= r_{j+1} (r_{j-i+2} r_{1} r_{2} r_{1})^2 \cancel{r_{j+1}} \cancel{r_{j+1}} (r_{j-i+2} r_{1} r_{2} r_{1})^2 r_{j+1}\\
        &= r_{j+1} (r_{j-i+2} r_{1} r_{2} r_{1})^4 r_{j+1} = e,
    \end{align*}
    where the first equality is due to (\ref{Rb9}), the second is due to (\ref{Rb7}) with $\ell = j+1$ and $k = i+1$, the last is due to (\ref{Rb4}) with $k=j-i+2\in[4,n]$, and use of (\ref{Rb1}).
    
    By Lemma \ref{l:btransition}, we see that the relations
    \begin{equation}
        \tag{Rb10} r_{k} = s_{0}(s_{1}s_{0})(s_{2}s_{1}s_{0})\cdots(s_{k-1}s_{k-2}\cdots s_{0}),\label{Rb10}
    \end{equation}
    for $k \in [n]$, are derivable from (\ref{Rb1})-(\ref{Rb9}). So by T1, we adjoin them to our presentation.
    
    We now shall verify that (\ref{Rb1})-(\ref{Rb7}) are derivable from (\ref{Cb1})-(\ref{Cb3}) and (\ref{Rb9})-(\ref{Rb10}).
    
    \noindent \emph{(\ref{Rb1}) is derivable from (\ref{Cb1})-(\ref{Cb4}) and (\ref{Rb9})-(\ref{Rb10}).}
    
    We will verify by induction on $k \in [n]$. First note that $r_{1}^2=s_{0}^2=e$. Assume that the result is true for some $k \in [n]$, $r_{k}^2=e$. Furthermore we can see that
    \[
        r_{k} = s_{0}(s_{1}s_{0})(s_{2}s_{1}s_{0}) \cdots (s_{k-1}s_{k-2}\cdots s_{0}) = (s_{0} s_{1} \cdots s_{k-1}) \cdots (s_{0}s_{1}s_{2})(s_{0}s_{1})s_{0}.
    \]
    Now consider
    \begin{align*}
        r_{k+1}^2 &= r_{k}(s_{k}s_{k-1}\cdots s_{0}) r_{k} (s_{k}s_{k-1}\cdots s_{0})\\
        &= r_{k} (s_{k}\cancel{s_{k-1}\cdots s_{0}}) [\cancel{(s_{0} s_{1} s_{2} \cdots s_{k-1})}  \cdots (s_{0}s_{1}s_{2})(s_{0}s_{1})s_{0}] (s_{k}s_{k-1}\cdots s_{0})\\
        &=r_{k}\cancel{s_{k}}(s_{0}s_{1}s_{2} \cdots s_{k-2}) \cdots (s_{0}s_{1}s_{2})(s_{0}s_{1})s_{0}(\cancel{s_{k}}s_{k-1}\cdots s_{0})\\
        &=\left[s_{0}(s_{1}s_{0})(s_{2}s_{1}s_{0})\cdots(s_{k-1}\cancel{s_{k-2}\cdots s_{1}s_{0}})\right](\cancel{s_{0}s_{1}s_{2} \cdots s_{k-2}}) \\
        &\qquad \cdots (s_{0}s_{1}s_{2})(s_{0}s_{1})s_{0}(s_{k-1}s_{k-2}\cdots s_{0})\\
        &=\left[s_{0}(s_{1}s_{0})(s_{2}s_{1}s_{0})\cdots(s_{k-2}s_{k-3}\cdots s_{0})\cancel{s_{k-1}}\right](s_{0}s_{1}s_{2} \cdots s_{k-3}) \\&\qquad 
        \cdots (s_{0}s_{1}s_{2})(s_{0}s_{1})s_{0}(\cancel{s_{k-1}}s_{k-2}\cdots s_{0})\\
        &\qquad\vdots\\
        &=\left[s_{0}(s_{1}s_{0})\right]s_{0}(s_{1}s_{0})=e.
    \end{align*}
    
    \noindent \emph{(\ref{Rb2}) is derivable from (\ref{Cb1})-(\ref{Cb4}) and (\ref{Rb9})-(\ref{Rb10}).}
    
    \begin{align*}
        (r_{2}r_{3})^6 &= (\cancel{s_{0}(s_{1}s_{0})}\cancel{s_{0}(s_{1}s_{0})}(s_{2}s_{1}s_{0}))^6\\
        &= (s_{2}s_{1}s_{0})^6\\
        &= s_{2}s_{1}s_{0}s_{2}s_{1}s_{0}s_{2}s_{1}s_{0}s_{2}s_{1}s_{0}s_{2}s_{1}s_{0}s_{2}s_{1}s_{0}\\
        %&= (s_{2}s_{1}s_{0}s_{2}s_{1}s_{0}s_{2}s_{1}s_{0})^2\\
        &= s_{2}s_{1}(s_{2}s_{0})s_{1}s_{0}s_{2}s_{1}(s_{2}s_{0})s_{1}s_{0}s_{2}s_{1}(s_{2}s_{0})s_{1}s_{0}\\
        &= (s_{1}s_{2}s_{1})s_{0}s_{1}s_{0}(s_{1}s_{2}s_{1})s_{0}s_{1}s_{0}(s_{1}s_{2}s_{1})s_{0}s_{1}s_{0}\\ 
        &= s_{1}s_{2}(s_{0}s_{1}\cancel{s_{0}})s_{2}(\cancel{s_{0}}s_{1}\cancel{s_{0}})s_{2}(\cancel{s_{0}}s_{1}s_{0}s_{1})\\
        &= s_{1}(s_{0}s_{2})s_{1}s_{2}s_{1}s_{2}s_{1}s_{0}s_{1}\\
        &= s_{1}s_{0} \,e\, s_{0}s_{1} = e.
    \end{align*} 
    
    \noindent \emph{(\ref{Rb3}) is derivable from (\ref{Cb1})-(\ref{Cb4}) and (\ref{Rb9})-(\ref{Rb10}).}
    
    Let $k \in [2,n]$.
    \begin{align*}
        (r_{1}r_{k})^4 &= (r_{1}r_{k}r_{1}r_{k})^2\\
        &= \left(\cancel{s_{0}}[(\cancel{s_{0}}s_{1} \cdots s_{k-1}) \cdots (s_{0}s_{1}) s_{0})]\cancel{s_{0}}[\cancel{s_{0}}(s_{1}s_{0})\cdots (s_{k-1}s_{k-2} \cdots s_{0})]\right)^2\\
        &= (s_{1}s_{2} \cdots s_{k-1}(s_{0}s_{1} \cdots s_{k-2}) \cdots(s_{0}s_{1}s_{2}s_{3}) (s_{0}s_{1}s_{2}) s_{1}s_{0}s_{1}(s_{2}s_{1}s_{0})\\
        &\qquad (s_{3}s_{2}s_{1}s_{0})\cdots (s_{k-2}s_{k-3} \cdots s_{0})(s_{k-1}s_{k-2}\cdots s_{0}))^2\\
        &= (s_{1}s_{2} \cdots s_{k-1}(s_{0}s_{1} \cdots s_{k-2}) \cdots(s_{0}s_{1}s_{2}s_{3}) s_{0}s_{2}s_{1} \cancel{s_{2}}s_{0}\cancel{s_{2}}s_{1}s_{2}s_{0}\\
        &\qquad (s_{3}s_{2}s_{1}s_{0})\cdots (s_{k-2}s_{k-3} \cdots s_{0})(s_{k-1}s_{k-2}\cdots s_{0}))^2\\
        &= (s_{1}s_{2} \cdots s_{k-1}(s_{0}s_{1} \cdots s_{k-2}) \cdots(s_{0}s_{1}s_{2}s_{3}) s_{2}\cancel{s_{0}}s_{1} s_{0}s_{1}\cancel{s_{0}}s_{2}\\
        &\qquad (s_{3}s_{2}s_{1}s_{0})\cdots (s_{k-2}s_{k-3} \cdots s_{0})(s_{k-1}s_{k-2}\cdots s_{0}))^2\\
        &= (s_{1}s_{2} \cdots s_{k-1}(s_{0}s_{1} \cdots s_{k-2}) \cdots(s_{0}s_{1}s_{2}s_{3}s_{4})s_{0}s_{1}s_{3}s_{2}\cancel{s_{3}}s_{1}s_{0}s_{1}\cancel{s_{3}}\\
        &\qquad s_{2}s_{3}s_{1}s_{0} (s_{4}s_{3}s_{2}s_{1}s_{0})\cdots (s_{k-2}s_{k-3} \cdots s_{0})(s_{k-1}s_{k-2}\cdots s_{0}))^2\\
        &= (s_{1}s_{2} \cdots s_{k-1}(s_{0}s_{1} \cdots s_{k-2}) \cdots(s_{0}s_{1}s_{2}s_{3}s_{4})s_{3}s_{0}s_{1}s_{2}s_{1}s_{0}s_{1}\\
        &\qquad s_{2}s_{1}s_{0}s_{3} (s_{4}s_{3}s_{2}s_{1}s_{0})\cdots (s_{k-2}s_{k-3} \cdots s_{0})(s_{k-1}s_{k-2}\cdots s_{0}))^2\\
        &= (s_{1}s_{2} \cdots s_{k-1}(s_{0}s_{1} \cdots s_{k-2}) \cdots(s_{0}s_{1}s_{2}s_{3}s_{4})s_{3}s_{0}s_{2}s_{1}\cancel{s_{2}}s_{0}\cancel{s_{2}}\\
        &\qquad s_{1}s_{2}s_{0}s_{3} (s_{4}s_{3}s_{2}s_{1}s_{0})\cdots (s_{k-2}s_{k-3} \cdots s_{0})(s_{k-1}s_{k-2}\cdots s_{0}))^2\\
        &= (s_{1}s_{2} \cdots s_{k-1}(s_{0}s_{1} \cdots s_{k-2}) \cdots(s_{0}s_{1}s_{2}s_{3}s_{4})s_{3}s_{2}\cancel{s_{0}}s_{1}s_{0}\\
        &\qquad s_{1}\cancel{s_{0}}s_{2}s_{3} (s_{4}s_{3}s_{2}s_{1}s_{0})\cdots (s_{k-2}s_{k-3} \cdots s_{0})(s_{k-1}s_{k-2}\cdots s_{0}))^2\\
        &\qquad \vdots\\
        &=(s_{1}s_{2} \cdots s_{k-1} (s_{k-2}s_{k-3} \cdots s_{1}s_{0}s_{1} \cdots s_{k-3}s_{k-2}) s_{k-1}s_{k-2} \cdots s_{1} s_{0})^2\\
        &\qquad \vdots\\
        &=(s_{k-1}s_{k-2} \cdots s_{3}s_{2}s_{1}\cancel{s_{2}}s_{0}\cancel{s_{2}}s_{1}s_{0}s_{2}s_{3} \cdots s_{k-2}s_{k-1})^2\\
        &=(s_{k-1}s_{k-2} \cdots s_{2} (s_{1}s_{0})^2 s_{2}s_{3} \cdots s_{k-2}s_{k-1})^2\\
        &=s_{k-1}s_{k-2} \cdots s_{2} (s_{1}s_{0})^4 s_{2}s_{3} \cdots s_{k-2}s_{k-1}\\
        &=s_{k-1}s_{k-2} \cdots s_{2} \,e\, s_{2}s_{3} \cdots s_{k-2}s_{k-1}=e.\\
    \end{align*}
    
    \noindent \emph{(\ref{Rb4}) is derivable from (\ref{Cb1})-(\ref{Cb4}) and (\ref{Rb9})-(\ref{Rb10}).}
    
    Let $k \in [4,n]$.
    \begin{align*}
        (r_{1}r_{2}r_{1}r_{k})^4 &= (\cancel{s_{0}}[\cancel{s_{0}}(s_{1}\cancel{s_{0}})]\cancel{s_{0}}[(s_{0}s_{1} \cdots s_{k-1})(s_{0}s_{1} \cdots s_{k-2}) \cdots (s_{0}s_{1})s_{0}])^4\\
        %&=(s_{1} (s_{0}s_{1} \cdots s_{k-1})(s_{0}s_{1} \cdots s_{k-2}) \cdots (s_{0}s_{1}) s_{0})^4\\
        &=([s_{1} (s_{0}s_{1} \cdots s_{k-1})(s_{0}s_{1} \cdots s_{k-2}) \cdots (s_{0}s_{1}) s_{0}]\\
        &\qquad [s_{1} s_{0} (s_{1}s_{0}) \cdots (s_{k-2}s_{k-3} \cdots s_{0})(s_{k-1}s_{k-2} \cdots s_{0})])^2\\
        &=(s_{1}(s_{0}s_{1} \cdots s_{k-1})(s_{0}s_{1} \cdots s_{k-2}) \cdots (s_{0}s_{1}s_{2}) s_{1} (s_{2}s_{1}s_{0}) \\
        &\qquad \cdots (s_{k-2} \cdots s_{1}s_{0})(s_{k-1} \cdots s_{1}s_{0}))^2\\
        &=(s_{1}(s_{0}s_{1} \cdots s_{k-1})(s_{0}s_{1} \cdots s_{k-2}) \cdots (s_{0}s_{1}s_{2}s_{3})\\
        &\qquad \cancel{s_{0}}s_{2}\cancel{s_{0}}(s_{3}s_{2}s_{1}s_{0}) \cdots (s_{k-2} \cdots s_{1}s_{0})(s_{k-1} \cdots s_{1}s_{0}))^2\\
        &\qquad \vdots\\
        &=(s_{1}(s_{0}s_{1} \cdots s_{k-3}s_{k-2}s_{k-1})s_{k-2}(s_{k-1}s_{k-2}s_{k-3} \cdots s_{1}s_{0}))^2\\
        &=(s_{1}\cancel{s_{0}s_{1} \cdots s_{k-3}} s_{k-1} \cancel{s_{k-3} \cdots s_{1}s_{0}})^2\\
        &=(s_{1}s_{k-1})^2 = e.
    \end{align*}
    
    \noindent \emph{(\ref{Rb5}) is derivable from (\ref{Cb1})-(\ref{Cb4}) and (\ref{Rb9})-(\ref{Rb10}).}
    
    Let $k \in [3,n]$.
    \begin{align*}
        (r_{k}r_{1}r_{k}r_{2})^2 &= ([(s_{0}s_{1} \cdots s_{k-1})(s_{0}s_{1} \cdots s_{k-2}) \cdots (s_{0}s_{1}s_{2})(s_{0}s_{1})s_{0}]\cancel{s_{0}}\\
        &\qquad [\cancel{s_{0}}(s_{1}s_{0})(s_{2}s_{1}s_{0})\cdots (s_{k-1}s_{k-2} \cdots s_{2}\cancel{s_{1}}\cancel{s_{0}})] \cancel{s_{0}} (\cancel{s_{1}}s_{0}))^2\\
        &=((s_{0}s_{1} \cdots s_{k-1})(s_{0}s_{1} \cdots s_{k-2}) \cdots (s_{0}s_{1}s_{2})s_{1}s_{0}s_{1}(s_{2}s_{1}s_{0})\\
        &\qquad \cdots (s_{k-2} \cdots s_{1}s_{0})s_{k-1}s_{k-2} \cdots s_{2}s_{0})^2\\
        &=((s_{0}s_{1} \cdots s_{k-1})(s_{0}s_{1} \cdots s_{k-2}) \cdots (s_{0}s_{1}s_{2}s_{3})s_{0}s_{2}s_{1}\cancel{s_{2}}s_{0}\\
        &\qquad \cancel{s_{2}}s_{1}s_{2}s_{0}(s_{3}s_{2}s_{1}s_{0}) \cdots (s_{k-2} \cdots s_{1}s_{0})s_{k-1}s_{k-2} \cdots s_{2}s_{0})^2\\
        &=((s_{0}s_{1} \cdots s_{k-1})(s_{0}s_{1} \cdots s_{k-2}) \cdots (s_{0}s_{1}s_{2}s_{3})s_{2}\cancel{s_{0}}s_{1}s_{0}s_{1}\cancel{s_{0}}s_{2}\\
        &\qquad (s_{3}s_{2}s_{1}s_{0}) \cdots (s_{k-2} \cdots s_{1}s_{0})s_{k-1}s_{k-2} \cdots s_{2}s_{0})^2\\
        &=((s_{0}s_{1} \cdots s_{k-1})(s_{0}s_{1} \cdots s_{k-2}) \cdots (s_{0}s_{1}s_{2}s_{3}s_{4})\\
        &\qquad s_{0}s_{1}s_{3}s_{2}\cancel{s_{3}}s_{1}s_{0}s_{1}\cancel{s_{3}}s_{2}s_{3}s_{1}s_{0}\\
        &\qquad (s_{4}s_{3}s_{2}s_{1}s_{0}) \cdots (s_{k-2} \cdots s_{1}s_{0})s_{k-1}s_{k-2} \cdots s_{2}s_{0})^2\\
        &=((s_{0}s_{1} \cdots s_{k-1})(s_{0}s_{1} \cdots s_{k-2}) \cdots (s_{0}s_{1}s_{2}s_{3}s_{4})\\
        &\qquad s_{3}s_{0}s_{2}s_{1}\cancel{s_{2}}s_{0}\cancel{s_{2}}s_{1}s_{2}s_{0}s_{3}\\
        &\qquad (s_{4}s_{3}s_{2}s_{1}s_{0}) \cdots (s_{k-2} \cdots s_{1}s_{0})s_{k-1}s_{k-2} \cdots s_{2}s_{0})^2\\
        &=((s_{0}s_{1} \cdots s_{k-1})(s_{0}s_{1} \cdots s_{k-2}) \cdots (s_{0}s_{1}s_{2}s_{3}s_{4})\\
        &\qquad s_{3}s_{2}\cancel{s_{0}}s_{1}s_{0}s_{1}\cancel{s_{0}}s_{2}s_{3}\\
        &\qquad (s_{4}s_{3}s_{2}s_{1}s_{0}) \cdots (s_{k-2} \cdots s_{1}s_{0})s_{k-1}s_{k-2} \cdots s_{2}s_{0})^2\\
        &\qquad \vdots \\
        &=(s_{0}s_{1} \cdots s_{k-1}(s_{k-2}s_{k-3} \cdots s_{1}s_{0}s_{1} \cdots s_{k-3}s_{k-2})s_{k-1}s_{k-2} \cdots s_{2}s_{0})^2\\
    \end{align*}
    \begin{align*}
        \textcolor{white}{(r_{k}r_{1}r_{k}r_{2})^2}
        &=(s_{0}s_{1} \cdots s_{k-4}s_{k-3}s_{k-1}s_{k-2}\cancel{s_{k-1}}s_{k-3}s_{k-4} \cdots s_{1}s_{0}\\
        &\qquad s_{1} \cdots s_{k-4}s_{k-3}\cancel{s_{k-1}}s_{k-2}s_{k-1}s_{k-3}s_{k-4} \cdots s_{2}s_{0})^2\\
        &=(s_{k-1}s_{0}s_{1} \cdots s_{k-4}s_{k-3}s_{k-2}s_{k-3}s_{k-4} \cdots s_{1}s_{0}\\
        &\qquad s_{1} \cdots s_{k-4}s_{k-3}s_{k-2}s_{k-3}s_{k-4} \cdots s_{2}s_{0}s_{k-1})^2\\
        &=(s_{k-1}s_{0}s_{1} \cdots s_{k-4}s_{k-2}s_{k-3}\cancel{s_{k-2}}s_{k-4} \cdots s_{1}s_{0}\\
        &\qquad s_{1} \cdots s_{k-4}\cancel{s_{k-2}}s_{k-3}s_{k-2}s_{k-4} \cdots s_{2}s_{0}s_{k-1})^2\\
        &=(s_{k-1}s_{k-2}s_{0}s_{1} \cdots s_{k-4}s_{k-3}s_{k-4} \cdots s_{1}s_{0}\\
        &\qquad s_{1} \cdots s_{k-4}s_{k-3}s_{k-4} \cdots s_{2}s_{0}s_{k-2}s_{k-1})^2\\
        &\qquad \vdots\\
        &=(s_{k-1}s_{k-2} \cdots s_{4}s_{0}s_{1}s_{2}s_{3}s_{2}s_{1}s_{0}s_{1}s_{2}s_{3}s_{2}s_{0}s_{4}s_{5} \cdots s_{k-2}s_{k-1})^2\\
        &=(s_{k-1}s_{k-2} \cdots s_{4}s_{0}s_{1}s_{3}s_{2}\cancel{s_{3}}s_{1}s_{0}s_{1}\cancel{s_{3}}s_{2}s_{3}s_{0}s_{4}s_{5} \cdots s_{k-2}s_{k-1})^2\\
        &=(s_{k-1}s_{k-2} \cdots s_{3}s_{0}s_{1}s_{2}s_{1}s_{0}s_{1}s_{2}s_{0}s_{3}s_{4} \cdots s_{k-2}s_{k-1})^2\\
        &=(s_{k-1}s_{k-2} \cdots s_{3}s_{0}s_{2}s_{1}s_{2}s_{0}s_{1}s_{0}s_{2}s_{3}s_{4} \cdots s_{k-2}s_{k-1})^2\\
        &=(s_{k-1}s_{k-2} \cdots s_{2}s_{0}s_{1}s_{2}s_{0}s_{1}s_{0}s_{2}s_{3} \cdots s_{k-2}s_{k-1})^2\\
        &=s_{k-1}s_{k-2} \cdots s_{2}s_{0}s_{1}(s_{2}s_{0})^2 s_{1}s_{0}s_{2}s_{3} \cdots s_{k-2}s_{k-1}\\
        &=s_{k-1}s_{k-2} \cdots s_{2}s_{0}s_{1}\,e\, s_{1}s_{0}s_{2}s_{3} \cdots s_{k-2}s_{k-1}=e.
    \end{align*}
    
    \noindent \emph{(\ref{Rb6}) is derivable from (\ref{Cb1})-(\ref{Cb4}) and (\ref{Rb9})-(\ref{Rb10}).}
    
    Let $k \in [2,n-1]$.
    \begin{align*}
        r_{k}r_{1}r_{2}r_{1}r_{k}r_{k+1}r_{2}r_{3}r_{2}r_{1}r_{k+1} &= [s_{k-1}][(s_{0}s_{1} \cdots s_{k})(s_{0}s_{1} \cdots s_{k-1}) \cdots (s_{0}s_{1})s_{0}][s_{2}\cancel{s_{1}s_{0}}]\\
        &\qquad [\cancel{s_{0}s_{1}}][s_{0}(s_{1}s_{0})(s_{2}s_{1}s_{0}) \cdots (s_{k}s_{k-1} \cdots s_{0})]\\
        &=s_{0}s_{1} \cdots s_{k-4}s_{k-3}s_{k-1}s_{k-2}s_{k-1}s_{k}(s_{0}s_{1} \cdots s_{k-1}) \\
        &\qquad \cdots (s_{0}s_{1})\cancel{s_{0}}s_{2}\cancel{s_{0}}(s_{1}s_{0})(s_{2}s_{1}s_{0}) \cdots (s_{k}s_{k-1} \cdots s_{0})\\
        &=s_{0}s_{1} \cdots s_{k-4}s_{k-3}s_{k-2}s_{k-1}s_{k-2}s_{k}(s_{0}s_{1} \cdots s_{k-1}) \\
        &\qquad \cdots (s_{0}s_{1})s_{2}(s_{1}s_{0})(s_{2}s_{1}s_{0}) \cdots (s_{k}s_{k-1} \cdots s_{0})\\
        &=(s_{0}s_{1} \cdots s_{k}) s_{k-2} (s_{0}s_{1} \cdots s_{k-1}) \cdots (s_{0}s_{1})s_{2}(s_{1}s_{0})\\
        &\qquad (s_{2}s_{1}s_{0}) \cdots (s_{k}s_{k-1} \cdots s_{0})\\
        &\qquad \vdots\\
        &=(s_{0}s_{1} \cdots s_{k})(s_{0}s_{1} \cdots s_{k-1}) \cdots (s_{0}s_{1}s_{2}s_{3})s_{1}(s_{0}s_{1}s_{2})\\
        &\qquad (s_{0}s_{1})s_{2}(s_{1}s_{0})(s_{2}s_{1}s_{0}) \cdots (s_{k}s_{k-1} \cdots s_{0})\\
        &=(s_{0}s_{1} \cdots s_{k})(s_{0}s_{1} \cdots s_{k-1}) \cdots (s_{0}s_{1}s_{2}s_{3})s_{1}s_{0}s_{1}s_{0}\\
        &\qquad s_{2}s_{1}s_{2}s_{1}s_{2}s_{0}s_{1}s_{0}(s_{3}s_{2}s_{1}s_{0}) \cdots (s_{k}s_{k-1} \cdots s_{0})\\
        &=(s_{0}s_{1} \cdots s_{k})(s_{0}s_{1} \cdots s_{k-1}) \cdots (s_{0}s_{1}s_{2}s_{3})s_{1}s_{0}s_{1}s_{0}\\
        &\qquad s_{1}s_{0}s_{1}s_{0}(s_{3}s_{2}s_{1}s_{0}) \cdots (s_{k}s_{k-1} \cdots s_{0})\\
        &=(s_{0}s_{1} \cdots s_{k})(s_{0}s_{1} \cdots s_{k-1}) \cdots (s_{0}s_{1}s_{2}s_{3}) \,e\,(s_{3}s_{2}s_{1}s_{0})\\
        &\qquad \cdots (s_{k}s_{k-1} \cdots s_{0})=e.\\
    \end{align*}
    
    \noindent \emph{(\ref{Rb8}) is derivable from (\ref{Cb1})-(\ref{Cb4}) and (\ref{Rb9})-(\ref{Rb10}).}
    
    Let $k \in [2,n-1]$.
    \begin{align*}
        r_{k+1}r_{1}r_{2}r_{1}r_{k+1}r_{k-1}r_{k}r_{k+1}r_{k} &= [\cancel{s_{k}}][\cancel{s_{k-1}s_{k-2} \cdots s_{0}}][\cancel{s_{0}s_{1} \cdots s_{k-1}}\cancel{s_{k}}]=e.
    \end{align*}
    
    \noindent \emph{(\ref{Rb7}) is derivable from (\ref{Cb1})-(\ref{Cb3}) and (\ref{Rb9})-(\ref{Rb10}).}
    
    This result is shown in Lemma \ref{l:srcommb}.
    
    Since (\ref{Rb1})-(\ref{Rb7}) are derivable from (\ref{Cb1})-(\ref{Cb4}) and (\ref{Rb9})-(\ref{Rb10}), then by T2 they may be removed. Furthermore, by T4, we can remove the generators $\{r_{k} \mid k\in [2,n]\}$, replace $r_{k}$ using (\ref{Rb10}) in (\ref{Rb9}), and remove the relations (\ref{Rb10}). Specifically with this replacement in (\ref{Rb9}) we have 
    \begin{align*}
        s_{i-1} &= [(s_{0}s_{1} \cdots s_{i-1}) \cdots (s_{0}s_{1})s_{0}] \cancel{s_{0}} [\cancel{s_{0}}(s_{1}\cancel{s_{0}})] \cancel{s_{0}}  [s_{0}(s_{1}s_{0})\cdots(s_{i-1}s_{i-2}\cdots s_{0})]\\
        &= (s_{0}s_{1} \cdots s_{i-1}) \cdots (s_{0}s_{1}s_{2}) (s_{1}) (s_{2}s_{1}s_{0})\cdots(s_{i-1}s_{i-2}\cdots s_{0})\\
        &= (s_{0}s_{1} \cdots s_{i-1}) \cdots (s_{0}s_{1}s_{2}s_{3})\cancel{s_{0}}(s_{2})\cancel{s_{0}}(s_{3}s_{2}s_{1}s_{0})\cdots(s_{i-1}s_{i-2}\cdots s_{0})\\
        &\;\;\vdots\\
        &= (s_{0}s_{1} \cdots s_{i-1})s_{i-2}(s_{i-1}s_{i-2} \cdots s_{0}) = s_{i-1},\\
    \end{align*}
    which is trivial and may also be removed. Therefore we are finally left with the presentation
    \[
        \langle s_{0},s_{1},\ldots,s_{n-1} \mid (\ref{Cb1}), (\ref{Cb2}), (\ref{Cb3}), (\ref{Cb4})\rangle,
    \]
    which is precisely the standard Coxeter presentation of $\bn$.
\qedsymbol

\section{Presentation of $\dn$}\label{s:dnpresentation}

Throughout this section the prefix reversals will be unsigned, but as elements of $\bn$, i.e. \[r_{k} = [k, (k-1), \ldots, 1, (k+1), (k+2), \ldots, n] \in \bn,\] for $k \in [2,n]$. We also define $\ol{r}_{2} = [-2, -1, 3, 4, \ldots, n] \in \bn$ to be the signed reversal of the first two elements. The main result follows, and all preliminary results needed have already been established in the prior sections.

\begin{thm}\label{t:dnpres}
A presentation of the type $D$ Coxeter group of degree $n>3$ has generators
$\left\{\ol{r}_{2} ,r_{2}, r_{3}, \ldots, r_{n}\right\}$
    and complete set of relators
    \begin{align}
        \tag{Rd1} \ol{r}_{2}^2 &; \label{Rd1}\\
        \tag{Rd2} r_{k}^2 &,\quad \text{for } k \in [2,n]; \label{Rd2}\\
        \tag{Rd3} \left(\ol{r}_{2}r_{2}\right)^2&; \label{Rd3}\\
        \tag{Rd4} \left(r_{2}r_{3}\right)^3&; \label{Rd4}\\
        \tag{Rd5} \left(r_{2}r_{k}\right)^4&, \quad \text{for } k \in [4,n]; \label{Rd5}\\
        \tag{Rd6} \left(\ol{r}_{2}r_{3}r_{2}r_{3}\right)^3&; \label{Rd6}\\  
        \tag{Rd7} \left(\ol{r}_{2}r_{k}r_{2}r_{k}\right)^2&, \quad \text{for } k \in [4, n]; \label{Rd7}\\
        \tag{Rd8} r_{k}r_{k-1}r_{k+1}r_{2} r_{k+1}r_{k}r_{k+1}&, \quad \text{for } k \in [3, n-1]; \label{Rd8}\\
        \tag{Rd9} (r_{k}r_{k-1})^2r_{k+1}r_{3} r_{k+1}r_{k-1}r_{k+1}&, \quad \text{for } k \in [3, n-1]; \text{ and} \label{Rd9}\\
        \tag{Rd10} r_{\ell}r_{\ell-k+2}r_{2}r_{\ell-k+2}r_{\ell} r_{k}r_{2}r_{k}&, \quad \text{for } \ell \in [4,n] \text{ and } k \in [3,\ell-2]. \label{Rd10}
    \end{align}
    
    That is, $\dn \cong \langle \ol{r}_{2}, r_{2}, r_{3}, \ldots, r_{n} \mid (\ref{Rd1}), (\ref{Rd2}), \ldots, (\ref{Rd10}) \rangle$.
   
\end{thm}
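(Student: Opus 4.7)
The plan is to follow the Tietze-transformation strategy of Theorems \ref{t:snpres} and \ref{t:bnpres}. Via T3, adjoin generators $s_0' := \bar r_2$ and $s_i := r_{i+1} r_2 r_{i+1}$ for $i \in [n-1]$. The goal is then to show that the standard type-$D$ Coxeter relators (\ref{Cd1})-(\ref{Cd6}), together with substitution identities $r_k = s_1(s_2s_1)\cdots(s_{k-1}\cdots s_1)$ for $k\in[2,n]$, are derivable from (\ref{Rd1})-(\ref{Rd10}); and conversely, that (\ref{Rd1})-(\ref{Rd10}) follow from (\ref{Cd1})-(\ref{Cd6}) and these substitutions. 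Once both directions are established, T2 removes (\ref{Rd1})-(\ref{Rd10}) and T4 removes $\bar r_2, r_2, \ldots, r_n$ by substitution, yielding the standard presentation of $\dn$.

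The part of the argument involving only the unsigned reversals $\{r_2,\ldots,r_n\}$ and the relations (\ref{Rd2}), (\ref{Rd4}), (\ref{Rd5}), (\ref{Rd8}), (\ref{Rd9}), (\ref{Rd10}) essentially transcribes the proof of Theorem \ref{t:snpres}: these six relations match exactly the six relations there, except that (\ref{Rd10}) restricts $k$ to $[3,\ell-2]$ rather than $[3,\ell-1]$. The proofs of Lemmas \ref{l:transition} and \ref{l:srcomm} go through unchanged, since they appeal only to the analogues of (\ref{Rd2}), (\ref{Rd8}), (\ref{Rd9}) together with the type-$A$ Coxeter relations on the $s_i$; thus the substitution identity and the commutation rules $s_k r_\ell = r_\ell s_{\ell-k}$ are obtained, and the derivations of (\ref{Cd1}), (\ref{Cd4}), (\ref{Cd6}) proceed exactly as for (\ref{CA1})-(\ref{CA3}). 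The narrower range in (\ref{Rd10}) is still sufficient because (\ref{Cd6}) for $i=1$ reduces directly to (\ref{Rd5}), while for $i\geq 2$ the required application uses (\ref{Rd10}) with $\ell=j+1$ and $k=i+1$; the hypothesis $j\geq i+2$ forces $k\leq \ell-2$ automatically. The reverse derivations of the $r$-relations from the $s$-relations are identical to those in Theorem \ref{t:snpres}.

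The genuinely new content is the treatment of $\bar r_2 = s_0'$. Each $\bar r_2$-relation translates immediately under the identifications: (\ref{Rd1}) is $s_0'^2 = e$, matching (\ref{Cd2}); (\ref{Rd3}) reads $(s_0' s_1)^2 = e$, supplying the $i = 1$ case of (\ref{Cd5}); (\ref{Rd6}) reads $(s_0' s_2)^3 = e$, which is (\ref{Cd3}); and (\ref{Rd7}) with $k \in [4,n]$ reads $(s_0' s_{k-1})^2 = e$ for $k-1 \in [3,n-1]$, supplying the remaining cases of (\ref{Cd5}). The reverse derivations follow similarly by direct substitution of $\bar r_2 = s_0'$ and $r_k = s_1(s_2 s_1)\cdots(s_{k-1}\cdots s_1)$. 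The main obstacle is thus careful bookkeeping of index ranges — reconciling (\ref{Cd5})'s $i \in \{1\} \cup [3, n-1]$ with the ranges in (\ref{Rd3}) and (\ref{Rd7}) while avoiding the order-$3$ relator (\ref{Cd3}) at $i = 2$, and confirming that the reduced range in (\ref{Rd10}) always suffices — but no computation deeper than those already in the proofs of Theorems \ref{t:snpres} and \ref{t:bnpres} is required.
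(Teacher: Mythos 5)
Your proposal follows the paper's proof essentially step for step: the same Tietze strategy, the same identifications $s_0'=\ol{r}_2$ and $s_{i-1}=r_i r_2 r_i$, the same appeal to the type-$A$ argument for everything involving only $r_2,\ldots,r_n$, and the same direct translations of (\ref{Rd1}), (\ref{Rd3}), (\ref{Rd6}), (\ref{Rd7}) into (\ref{Cd2}), (\ref{Cd3}), (\ref{Cd5}). Your observations that Lemmas \ref{l:transition} and \ref{l:srcomm} only invoke the analogues of (\ref{R1}), (\ref{R6}), (\ref{R7}) (i.e.\ (\ref{Rd2}), (\ref{Rd8}), (\ref{Rd9})), and that the index bookkeeping for (\ref{Cd5}) correctly routes $i=1$ through (\ref{Rd3}) and $i\in[3,n-1]$ through (\ref{Rd7}), are exactly what the paper does.

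There is, however, one concrete gap in the step you singled out for extra care. You argue that the narrower range $k\in[3,\ell-2]$ in (\ref{Rd10}) is harmless because the derivation of (\ref{Cd6}) uses $\ell=j+1$, $k=i+1$ with $j\geq i+2$, hence $k\leq\ell-2$. That is correct for (\ref{Cd6}), but the derivation of (\ref{Cd4}) for $i\geq 2$, which you claim ``proceeds exactly as for (\ref{CA2}),'' also consumes a relator from this family: the type-$A$ computation rewrites $s_i=r_{i+1}r_2r_{i+1}$ as $r_{i+2}r_3r_2r_3r_{i+2}$, which is the instance $\ell=i+2$, $k=i+1=\ell-1$ — precisely the instance that (\ref{Rd10}) excludes (indeed, for $\ell=4$ the family (\ref{Rd10}) is empty). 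So either the missing instances $r_{\ell}r_3r_2r_3r_{\ell}r_{\ell-1}r_2r_{\ell-1}$ must first be derived from (\ref{Rd8})--(\ref{Rd10}), or (\ref{Cd4}) needs a different derivation; your proposal supplies neither. (The paper's own proof cites the type-$A$ computation for (\ref{Cd4}) without comment, so it shares this omission, but since you explicitly raised and purported to settle the range question, the incomplete check is the one substantive defect to repair.)
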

\begin{proof}
    We follow the same process as Theorem \ref{t:snpres} and Theorem \ref{t:bnpres}, applying Tietze transformations to recover the standard Coxeter group presentation of $D_n$.
    
    By T3, we can adjoin the generators $\{s_{1},s_{2}, \ldots, s_{n-1}\}$ and the relations
    \begin{equation}
        \tag{Rd11} s_{i-1} = r_{i}r_{2}r_{i}, \text{ for } i \in [2,n].\label{Rd11}
    \end{equation}
    Note that another generator, $s_{0}'$, is also a standard generator, which is identified with $\ol{r}_{2}$. That is, $s_{0}'=\ol{r}_{2}$. We will use either symbol interchangeably. Notice also that (\ref{Cd2}) is the same as (\ref{Rd1}).
    
    We also may adjoin the relators (\ref{Cd1}), (\ref{Cd3})-(\ref{Cd6}). 
    % \begin{align}
    %     \tag{Cd1} s_{i}^2&, \text{ for } i \in [n-1];\label{Cd1}\\
    %     \tag{Cd3} (s_{0}'s_{2})^3&; \label{Cd3}\\
    %     \tag{Cd4} (s_{i}s_{i+1})^3&, \text{ for } i \in [n-2];\label{Cd4}\\
    %     \tag{Cd5} (s_{0}'s_{i})^2&, \text{ for } i \in \{1\} \cup [3,n-1]; \text{ and }\label{Cd5}\\
    %     \tag{Cd6} (s_{i}s_{j})^2&, \text{ for } i \in [n-1] \text{ and } j \in [i+2,n-1].\label{Cd6}
    % \end{align}
    However, we must verify that these new relators are derived from (\ref{Rd1})-(\ref{Rd11}).
    Within the proof of Theorem \ref{t:snpres} we have already shown that (\ref{Cd1}) is derivable from (\ref{Rd2}); (\ref{Cd4}) is derivable from (\ref{Rd4}) and (\ref{Rd10}); and (\ref{Cd6}) is derivable from (\ref{Rd5}) and (\ref{Rd10}). So it only remains to show that (\ref{Cd3})and (\ref{Cd5}) are derivable from (\ref{Rd1})-(\ref{Rd11}).
    First we can see
    \begin{equation*}
        (s_{0}'s_{2})^3 = (\ol{r}_{2}r_{3}r_{2}r_{3})^3=e
    \end{equation*}
    by (\ref{Rd6}). Next we see for $i=1$ in (\ref{Cd5}) that 
    \begin{equation*}
        (s_{0}'s_{1})^2 = (\ol{r}_{2}r_{2})^2 = e
    \end{equation*}
    by (\ref{Rd3}). Finally we see for $i \in [3,n-1]$ that
    \begin{equation*}
        (s_{0}'s_{i})^2 = (\ol{r}_{2}r_{i+1}r_{2}r_{i+1})^2 = e
    \end{equation*}
    by (\ref{Rd7}) with $k=i+1$.
    
    By Lemma \ref{l:transition} we see that the relations
    \begin{equation}
        \tag{Rd12} r_{k} = s_{1} (s_{2}s_{1}) \cdots (s_{k-1}s_{k-2} \cdots s_{1})\label{Rd12}
    \end{equation}
    for $k \in [n]$, are derivable from (\ref{Rd2}), (\ref{Rd4}), (\ref{Rd5}), and (\ref{Rd8})-(\ref{Rd11}).
    So by T1, we adjoin them to our presentation.
    
    Within the proof of Theorem \ref{t:snpres} we have shown that (\ref{Rd2}), (\ref{Rd4}), (\ref{Rd5}), (\ref{Rd8})-(\ref{Rd10}) are derivable from (\ref{Cd1})-(\ref{Cd6}) and (\ref{Rd11})-(\ref{Rd12}). It remains to verify that (\ref{Rd3}), (\ref{Rd6}), and (\ref{Rd7}) are derivable from (\ref{Cd1})-(\ref{Cd6}) and (\ref{Rd11})-(\ref{Rd12}).
    
    \noindent\emph{(\ref{Rd3}) is derivable from (\ref{Cd1})-(\ref{Cd6}) and (\ref{Rd11})-(\ref{Rd12}).}
    
    \begin{equation*}
        (\ol{r}_{2}r_{2})^2 = (s_{0}'s_{1})^2 = e
    \end{equation*}
    by (\ref{Cd5}) when $i=1$.
    
    \noindent\emph{(\ref{Rd6}) is derivable from (\ref{Cd1})-(\ref{Cd6}) and (\ref{Rd11})-(\ref{Rd12}).}
    
    \begin{equation*}
        (\ol{r}_{2}r_{3}r_{2}r_{3})^3 = (s_{0}'s_{2})^3=e
    \end{equation*}
    by (\ref{Cd3}).
    
    \noindent\emph{(\ref{Rd7}) is derivable from (\ref{Cd1})-(\ref{Cd6}) and (\ref{Rd11})-(\ref{Rd12}).}
    
    Let $k \in [4,n]$.
    \begin{equation*}
        (\ol{r}_{2}r_{k}r_{2}r_{k})^2 = (s_{0}' s_{k-1})^2 = e
    \end{equation*}
    by (\ref{Cd5}) with $i=k-1$.
    
    Since (\ref{Rd2})-(\ref{Rd10}) are derivable from (\ref{Cd1})-(\ref{Cd6}) and (\ref{Rd11})-(\ref{Rd12}), then by T2 they may be removed. Furthermore, by T4, we can remove the generators $\{r_{k} \mid k \in [2,n]\}$, replace $r_{k}$ using (\ref{Rd12}) in (\ref{Rd11}), and remove the relations (\ref{Rd12}). Once again in the proof of Theorem \ref{t:snpres} we had shown that the substitution of (\ref{Rd12}) into (\ref{Rd11}) yields a trivial relation that may be removed. Therefore we are finally left with the presentation
    \begin{equation*}
        \langle s_{0}', s_{1}, s_{2}, \ldots, s_{n-1} \mid (\ref{Cd1}), (\ref{Cd2}), (\ref{Cd3}), (\ref{Cd4}), (\ref{Cd5}), (\ref{Cd6}) \rangle
    \end{equation*}
    which is precisely the standard Coxeter presentation of $D_n$.
\end{proof}

\section{Conclusion}\label{s:conclusion}
The presentations that we have provided for each of these three types of Coxeter are by no means as elegant as the standard presentations. What makes these presentations worthwhile, though, is that they describe fundamental relators within the prefix reversals. This opens up the opportunity to apply word processing techniques on the prefix-reversal presentations. At the very least, these presentations are of pedagogical use. They provide a further example of an abstract definition for these particularly ubiquitous finite groups.

The authors have also yet to see the application of prefix reversals with regard to the type $D$ Coxeter groups. It appears that many of the more intriguing results regarding prefix reversals and the ``pancake problem'' are somewhat ``simpler'' in the case of signed permutations compared to unsigned. Perhaps there is an analogous relationship with the subgroup $\dn$ to $\bn$.

One could also explore applications to the pancake problem. For example, employing the Knuth-Bendix algorithm to create a confluent rewriting system from these presentations would be of particular interest. Perhaps the use of such a rewriting system may be employed to reduce randomly generated words in prefix reversals to find some probabilistic predictions of (burnt) pancake numbers that are presently unknown. 

\section{Acknowledgments}\label{s:ack}

The authors are grateful to Cassandra Carlson and Jasmine Ward for their contributions in preparing this work.

\printbibliography

@book{BjornerBrenti,
	Author = {Bj{\"o}rner, Anders and Brenti, Francesco},
	Isbn = {978-3540-442387},
	Mrclass = {05-01 (05E15 20F55)},
	Mrnumber = {2133266 (2006d:05001)},
	Mrreviewer = {Jian-yi Shi},
	Pages = {xiv+363},
	Publisher = {Springer, New York},
	Series = {Graduate Texts in Mathematics},
	Title = {Combinatorics of {C}oxeter groups},
	Volume = {231},
	Year = {2005}}

@article{Chitt,
	Author = {Chitturi, B. and Fahle, W. and Meng, Z. and Morales, L. and Shields, C. O. and Sudborough, I. H. and Voit, W.},
	Coden = {TCSDI},
	Doi = {10.1016/j.tcs.2008.04.045},
	Fjournal = {Theoretical Computer Science},
	Issn = {0304-3975},
	Journal = {Theoret. Comput. Sci.},
	Mrclass = {68R05},
	Mrnumber = {2547723 (2010m:68127)},
	Number = {36},
	Pages = {3372--3390},
	Title = {An {$(18/11)n$} upper bound for sorting by prefix reversals},
	Url = {http://dx.doi.org/10.1016/j.tcs.2008.04.045},
	Volume = {410},
	Year = {2009},
	Bdsk-Url-1 = {http://dx.doi.org/10.1016/j.tcs.2008.04.045}}

@article{Dweighter75,
	Author = {Kleitman, D. J. and Kramer, Edvard and Conway, J. H. and Bell, Stroughton and Dweighter, Harry},
	Coden = {AMMYAE},
	Doi = {10.2307/2318260},
	Fjournal = {The American Mathematical Monthly},
	Issn = {0002-9890},
	Journal = {Amer. Math. Monthly},
	Mrclass = {Contributed Item},
	Mrnumber = {1537911},
	Number = {10},
	Pages = {1009--1010},
	Title = {Problems and {S}olutions: {E}lementary {P}roblems: {E}2564-{E}2569},
	Url = {http://dx.doi.org/10.2307/2318260},
	Volume = {82},
	Year = {1975},
	Bdsk-Url-1 = {http://dx.doi.org/10.2307/2318260}}

@article{GatesPapa,
	Author = {Gates, William H. and Papadimitriou, Christos H.},
	Coden = {DSMHA4},
	Doi = {10.1016/0012-365X(79)90068-2},
	Fjournal = {Discrete Mathematics},
	Issn = {0012-365X},
	Journal = {Discrete Math.},
	Mrclass = {68E05},
	Mrnumber = {534952 (80j:68051)},
	Mrreviewer = {Frank K. Hwang},
	Number = {1},
	Pages = {47--57},
	Title = {Bounds for sorting by prefix reversal},
	Url = {http://dx.doi.org/10.1016/0012-365X(79)90068-2},
	Volume = {27},
	Year = {1979},
	Bdsk-Url-1 = {http://dx.doi.org/10.1016/0012-365X(79)90068-2}}

@book {Magnus,
    AUTHOR = {Magnus, Wilhelm and Karrass, Abraham and Solitar, Donald},
     TITLE = {Combinatorial group theory},
   EDITION = {second},
      NOTE = {Presentations of groups in terms of generators and relations},
 PUBLISHER = {Dover Publications, Inc., Mineola, NY},
      YEAR = {2004},
     PAGES = {xii+444},
      ISBN = {0-486-43830-9},
   MRCLASS = {20E05 (05A15 20E06 20F12)},
  MRNUMBER = {2109550},
}

@manual{GAP4,
    key          = "GAP",
    organization = "The GAP~Group",
    title        = "{GAP -- Groups, Algorithms, and Programming,
                    Version 4.11.0}",
    year         = 2020,
    url          = "https://www.gap-system.org",
    }

@book {Burnside,
    AUTHOR = {Burnside, William},
     TITLE = {The Theory of Groups of Finite Order},
   EDITION = {second},
 PUBLISHER = {Cambridge University Press},
      YEAR = {1911},
}

@article{Carmichael,
    AUTHOR = {Carmichael, R. D.},
    TITLE = {Abstract definitions of the symmetric and alternating groups and certain other permutation groups},
    JOURNAL = {Quarterly J. Math.},
    FJOURNAL = {The Quarterly Journal of Pure and Applied Mathematics},
    VOLUME = {49},
    YEAR = {1923},
    PAGES = {226--270},
}

@article {BBrelations,
    AUTHOR = {Blanco, Sa\'{u}l A. and Buehrle, Charles},
     TITLE = {Some relations on prefix-reversal generators of the symmetric
              and hyperoctahedral groups},
   JOURNAL = {Australas. J. Combin.},
  FJOURNAL = {The Australasian Journal of Combinatorics},
    VOLUME = {76},
      YEAR = {2020},
    NUMBER = {part 1},
     PAGES = {404--427},
      ISSN = {1034-4942},
   MRCLASS = {20F05 (05E18 20F55)},
  MRNUMBER = {4068171},
}

@book {CoxMoser,
    AUTHOR = {Coxeter, H. S. M. and Moser, W. O. J.},
     TITLE = {Generators and relations for discrete groups},
    SERIES = {Second edition. Ergebnisse der Mathematik und ihrer
              Grenzgebiete, Neue Folge, Band 14},
 PUBLISHER = {Springer-Verlag, Berlin-G\"{o}ttingen-New York},
      YEAR = {1965},
     PAGES = {ix+161},
   MRCLASS = {20.10},
  MRNUMBER = {0174618},
}

\end{document}